\documentclass[11pt,reqno,a4paper]{amsart}

\usepackage[T1]{fontenc}
\usepackage[utf8]{inputenc}

\usepackage{lmodern}
\usepackage{microtype}
\usepackage{mathtools}
\usepackage{amssymb}
\usepackage{mathrsfs}
\usepackage{bm}
\usepackage{enumitem}
\usepackage{xcolor}
\usepackage{aliascnt}
\usepackage{hyperref}
\usepackage[nameinlink,capitalise,noabbrev]{cleveref}

\hypersetup{
  colorlinks=true,
  linkcolor=blue!45!black,
  citecolor=green!35!black,
  urlcolor=blue!55!black,
  pdftitle={Elastic BB knots are multifold circles}
}

\numberwithin{equation}{section}

\newtheorem{theorem}{Theorem}[section]
\newaliascnt{proposition}{theorem}
\newtheorem{proposition}[proposition]{Proposition}
\aliascntresetthe{proposition}
\newaliascnt{lemma}{theorem}
\newtheorem{lemma}[lemma]{Lemma}
\aliascntresetthe{lemma}
\newaliascnt{corollary}{theorem}
\newtheorem{corollary}[corollary]{Corollary}
\aliascntresetthe{corollary}
\newaliascnt{assumption}{theorem}
\newtheorem{assumption}[assumption]{Assumption}
\aliascntresetthe{assumption}
\newaliascnt{definition}{theorem}
\newtheorem{definition}[definition]{Definition}
\aliascntresetthe{definition}
\newaliascnt{remark}{theorem}
\newtheorem{remark}[remark]{Remark}
\aliascntresetthe{remark}

\crefname{proposition}{Proposition}{Propositions}
\crefname{lemma}{Lemma}{Lemmas}
\crefname{corollary}{Corollary}{Corollaries}
\crefname{assumption}{Assumption}{Assumptions}
\crefname{definition}{Definition}{Definitions}
\crefname{remark}{Remark}{Remarks}

\newcommand{\T}{\mathbb T}
\newcommand{\R}{\mathbb R}
\newcommand{\K}{\mathfrak K}
\newcommand{\Cclass}{\mathscr C}
\newcommand{\Length}{\mathscr L}
\newcommand{\TC}{\operatorname{TC}}
\newcommand{\Thi}{\operatorname{Thi}}
\newcommand{\Rop}{\operatorname{Rop}}
\newcommand{\Eb}{E_{\mathrm{bend}}}
\newcommand{\Fbar}{\overline E}
\newcommand{\MinRad}{\operatorname{MinRad}}
\newcommand{\dcsd}{\operatorname{dcsd}}

\newcommand{\weakto}{\rightharpoonup}
\newcommand{\sign}{\operatorname{sign}}

\title{Elastic BB knots are multifold circles}
\author{Philipp Reiter}
\address[Philipp Reiter]{Chemnitz University of Technology,
Faculty of Mathematics, 09107~Chemnitz, Germany}
\email{reiter@math.tu-chemnitz.de}	

\author{Heiko von der Mosel}
\address[Heiko von der Mosel]{RWTH Aachen University,
Institute for Mathematics,
Templergraben~55,
52062~Aachen, Germany}
\email{heiko@instmath.rwth-aachen.de}
\date{September 18, 2026}

\subjclass[2020]{49Q10, 53A04, 57K10, 74B05}
\keywords{elastic knot, bending energy, ropelength, thickness, bridge index,
braid index, BB knot, multiply covered circle}

\begin{document}

\begin{abstract}
We study elastic knots obtained by minimizing the bending energy of a unit
loop together with a small multiple of ropelength.  We assume a generalized
F\'ary--Milnor inequality on the $C^1$-boundary of a knot class: every
$H^2$-limit curve in the $C^1$-closure has total curvature at least $2\pi$
times the bridge index.  Under this assumption we prove that every elastic
knot in a one-component BB knot class, that is, a class whose bridge and
braid indices coincide, is the round circle covered precisely that many
times.

The proof does not differentiate the nonsmooth ropelength functional.  A
minimal closed braid is contracted in the fibres of a solid torus, producing
a {family of comparison curves}
with quadratic bending-energy excess and thickness of
linear order.  This yields a lower thickness bound for the regularized
minimizers.  {Variations} on an intermediate scale then give
free stationarity of the limiting curve for the normalized bending energy.
Constant curvature and free stationarity reduce the classification to a
linear distributional ordinary differential equation.
\end{abstract}

\maketitle

\section{Introduction}

For an arclength-parametrized closed curve
$\gamma\colon\T:=\R/\mathbb Z\to\R^3$%
, the bending energy is
\begin{equation}\label{eq:intro-bending}
  \Eb(\gamma):=\int_{\T}|\gamma''(s)|^2\,ds.
\end{equation}
Minimization of \eqref{eq:intro-bending} within a nontrivial knot class is
not a closed variational problem: minimizing sequences may develop
self-contact and leave the class of embeddings.  Following
\cite{GerlachReiterVonderMosel2017}, we use ropelength $\Rop(\cdot)$ 
as a self-avoidance
penalty and minimize
\begin{equation}\label{eq:intro-total}
  E_\vartheta(\gamma)
  :=\Eb(\gamma)+\vartheta\Rop(\gamma),
  \qquad \vartheta>0,
\end{equation}
among unit-length representatives of a prescribed tame knot class $\K$.
Subsequential limits as $\vartheta\downarrow0$ are called elastic knots.

The bridge index $\beta(\K)$ gives the lower bound
\begin{equation}\label{eq:bridge-lower-intro}
  \TC(\gamma)\ge 2\pi\beta(\K)
\end{equation}
for embedded representatives $\gamma$ of $\K$; 
this is contained in Milnor's crookedness
interpretation of total curvature \cite{Milnor1950}.  By Cauchy--Schwarz,
\eqref{eq:bridge-lower-intro} implies
\begin{equation}\label{eq:bending-lower-intro}
  \Eb(\gamma)\ge (2\pi\beta(\K))^2
\end{equation}
for unit loops.  Equality in this estimate forces constant curvature, but
constant curvature alone does not classify a nonembedded $H^2$-curve.  In
particular, the curvature vector may change direction at parameter values
which are mapped to a self-contact point.

The second topological quantity relevant here is the braid index
$\operatorname{braid}(\K)$.  Diao, Ernst, and Reiter
\cite{DiaoErnstReiter2021} call $\K$ a \emph{BB knot class} if
\begin{equation}\label{eq:BB}
  \operatorname{braid}(\K)=\beta(\K).
\end{equation}
The equality suggests that a minimal closed braid can collapse to the
$\beta(\K)$-covered circle while saturating
\eqref{eq:bending-lower-intro}.  The purpose of this paper is to turn that
picture into a proof, conditional on the following extension of
\eqref{eq:bridge-lower-intro} to the boundary of the knot class.

\begin{assumption}[Generalized F\'ary--Milnor inequality]\label{ass:GFM}
Let $\K$ be a one-component tame knot class and let
$\beta=\beta(\K)$.  Every unit-speed curve
$u\in H^2(\T,\R^3)$ which belongs to the $C^1$-closure of the unit loops in
$\K$ satisfies
\begin{equation}\label{eq:GFM}
  \TC(u)\ge 2\pi\beta.
\end{equation}
\end{assumption}

For $\beta=2$, an extension of the ordinary F\'ary--Milnor inequality to
the $C^1$-closure was proved in \cite[Theorem~A.1]{GerlachReiterVonderMosel2017};
see also \cite[Theorem~4.4]{GilsbachReiterVonderMosel2023}.
{For $\beta\ge 2$ the F\'ary--Milnor inequality was established
by Wacker~\cite{wacker2020,wacker2021}
under the assumption that $u$ has only
isolated self-intersection points.}
In the present paper
\cref{ass:GFM} is an explicit hypothesis, \emph{not} a claimed theorem.

Our main result is the following.

\begin{theorem}[Elastic BB knots]\label{thm:main-intro}
Let $\K$ be a one-component tame BB knot class and set
\[
  \beta:=\beta(\K)=\operatorname{braid}(\K).
\]
Suppose that \cref{ass:GFM} holds.  For every sequence
$\vartheta_j\downarrow0$ and
every choice of global minimizers $\gamma_{\vartheta_j}$ of
\eqref{eq:intro-total} in $\K$, every subsequential elastic limit is, up to
a Euclidean isometry and reparametrization,
\begin{equation}\label{eq:covered-circle-intro}
  u_\beta(s)
  =c+\frac{1}{2\pi\beta}
  \bigl(e_1\cos(2\pi\beta s)+e_2\sin(2\pi\beta s)\bigr),
\end{equation}
where $e_1,e_2\in\R^3$ are orthonormal.  In particular, the image is a
round circle and the parametrization covers it exactly $\beta$ times.
Moreover, along every convergent subsequence one has strong convergence in
$H^2(\T,\R^3)$.
\end{theorem}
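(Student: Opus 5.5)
The plan follows the abstract: a comparison construction, then energy asymptotics, a thickness bound, stationarity of the limit, and finally an ODE classification.

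\emph{Step 1: comparison curves.} Because $\K$ is a BB class, we can pick a closed $\beta$-braid representative of $\K$ inside a solid torus around the circle of radius $1/(2\pi\beta)$. We parametrize it as $\gamma_\varepsilon(s)=u_\beta(s)+\varepsilon\,w(s)$, where $w$ is a smooth normal field, periodic with period $1$ but not with period $1/\beta$. The strands at a given angular position are separated by distance of order $\varepsilon$. After rescaling to unit length, the excess $\Eb(\gamma_\varepsilon)-(2\pi\beta)^2$ should be $O(\varepsilon^2)$: $u_\beta$ is stationary for the length-normalized bending energy, so the first variation vanishes. The thickness satisfies $\Thi(\gamma_\varepsilon)\ge c\varepsilon$, hence $\Rop(\gamma_\varepsilon)\le C/\varepsilon$. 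Minimizing $C\varepsilon^2+\vartheta C/\varepsilon$ with $\varepsilon\sim\vartheta^{1/3}$ gives
\[
  E_\vartheta(\gamma_\vartheta)\le (2\pi\beta)^2+C\vartheta^{2/3}.
\]

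\emph{Step 2: energy convergence and a thickness bound.} The energies are bounded, so the minimizers are precompact weakly in $H^2$ and strongly in $C^1$. Any limit $u$ is unit-speed and lies in the $C^1$-closure of $\K$. By \cref{ass:GFM} and Cauchy--Schwarz,
\[
  (2\pi\beta)^2\le\Eb(u)\le\liminf\Eb(\gamma_\vartheta)\le(2\pi\beta)^2 .
\]
This gives norm convergence, hence strong $H^2$ convergence. Equality in Cauchy--Schwarz forces $|u''|\equiv2\pi\beta$. Comparing $\Eb(\gamma_\vartheta)\ge(2\pi\beta)^2$ with the upper bound also gives $\vartheta\Rop(\gamma_\vartheta)\le C\vartheta^{2/3}$, i.e. $\Thi(\gamma_\vartheta)\ge c\vartheta^{1/3}$.

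\emph{Step 3: free stationarity of $u$.} We cannot differentiate $\Rop$. Instead, take a smooth field $\phi$ and perturb $\gamma_\vartheta+t\phi$ with $|t|\le\delta_\vartheta$, where $\delta_\vartheta\ll\Thi(\gamma_\vartheta)$ but $\delta_\vartheta\gg\vartheta^{2/3}$; for example $\delta_\vartheta=\vartheta^{1/2}$. Since $t\|\phi\|_{C^1}\ll\Thi$, the perturbation stays in $\K$. Since $\Thi$ is Lipschitz-stable under $C^{1,1}$-small perturbations up to relative errors, $\Rop$ changes by at most $O(\vartheta^{-1/3}\cdot t/\vartheta^{1/3})$. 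After reparametrizing to unit length, minimality gives
\[
  t\,\delta\Fbar(\gamma_\vartheta)[\phi]+O(t^2)\ge-\vartheta\,|\Delta\Rop| .
\]
We divide by $t=\pm\delta_\vartheta$ and let $\vartheta\to0$, using strong $H^2$ convergence. The choice of scales should make the ropelength term vanish in the limit, so $\delta\Fbar(u)=0$ for the normalized energy $\Fbar=\Length\cdot\Eb$. I expect balancing these scales to be the main obstacle. The difficulty is controlling $\Rop(\gamma+t\phi)-\Rop(\gamma)$ quantitatively through the thickness characterization via $\MinRad$ and $\dcsd$. If linear control in $t/\Thi$ fails, the first fallback is to use the $\vartheta^{1/3}$ thickness lower bound together with the bounded-excess estimate.

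\emph{Step 4: classification.} Free stationarity gives the distributional Euler--Lagrange equation
\[
  u''''+\bigl(\lambda u'\bigr)'=0,
\]
with $\lambda$ a Lagrange multiplier arising from the unit-speed constraint in $H^2$. Using $|u''|\equiv\kappa=2\pi\beta$, the tangential part determines $\lambda$ as a constant, namely $\lambda=\tfrac{3}{2}\kappa^2$ up to the normalization term. Integrating once gives a linear ODE
\[
  u'''+\lambda u'=a
\]
for a constant vector $a$; closedness forces $a$ to satisfy a compatibility condition. Its periodic solutions with $|u'|=1$, $|u''|=\kappa$ and period $1$ are exactly the circles $u_\beta$ in \eqref{eq:covered-circle-intro}. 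These are traversed $\kappa/2\pi=\beta$ times, which finishes the proof.
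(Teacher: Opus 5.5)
Your outline is the paper's proof step for step: the fibre-contracted minimal braid $u_\beta+\varepsilon w$ with $O(\varepsilon^2)$ excess and thickness $\gtrsim\varepsilon$, the choice $\varepsilon=\vartheta^{1/3}$, the GFM equality chain giving strong $H^2$-convergence and $|u''|\equiv2\pi\beta$, variations at scale $\vartheta^{1/2}$, and a linear ODE. The gap is the step you flag as the main obstacle, which you leave unproved and which is load-bearing.

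\textbf{The missing lemma.} What is needed is \cref{lem:multiplicative-thickness}: for fixed smooth $\phi$ there are $a_\phi,c_\phi>0$, \emph{independent of the reference curve}, with the following property. For every unit-speed unit loop $\gamma$ of positive thickness, $|\varepsilon|\le a_\phi\Thi(\gamma)$ implies that $\gamma+\varepsilon\phi$ is embedded, isotopic to $\gamma$, and satisfies $\Thi(\gamma+\varepsilon\phi)\ge c_\phi\Thi(\gamma)$.

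Your fallback does not bypass this. The bounds $\Delta_\vartheta\gtrsim\vartheta^{1/3}$ and $\vartheta\Rop(\gamma_\vartheta)\lesssim\vartheta^{2/3}$ concern $\gamma_\vartheta$, not the competitor. Without transferring a thickness of order $\Delta_\vartheta$ to $\gamma_\vartheta\pm\varepsilon\phi$, you have no control of $\vartheta\Rop(\gamma_\vartheta\pm\varepsilon\phi)/\varepsilon$.

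The paper's proof of the lemma is short:
\begin{itemize}
\item The bound $\|\gamma''\|_\infty\le\Delta^{-1}$ gives $\kappa_\eta\le C_\phi/\Delta$ for $\eta=\gamma+\varepsilon\phi$.
\item For $0<|s-t|_\T<\alpha\Delta$, the component of the chord $\eta(t)-\eta(s)$ along $\eta'(s)$ has the sign of $t-s$. So short chords are neither degenerate nor doubly critical.
\item For $|s-t|_\T\ge\alpha\Delta$, the chord--arc estimate \eqref{eq:chord-arc-converse} gives $|\gamma(t)-\gamma(s)|\ge2\Delta\sin(\alpha/2)$. For $a_\phi$ small this survives the chord perturbation, whose size is at most $\tfrac12a_\phi\Delta\|\phi'\|_\infty$.
\item Then $\Thi=\min\{\MinRad,\tfrac12\dcsd\}$ gives the thickness bound, and $C^1$-stability of isotopy classes along $r\in[-1,1]$ gives membership in $\K$.
\end{itemize}

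Only this one-sided bound is used. Discarding $\Rop(\gamma_\vartheta)\ge0$, minimality costs at most $C\vartheta/\Delta_\vartheta$, and $\vartheta/(\varepsilon_\vartheta\Delta_\vartheta)\lesssim\vartheta^{1/6}$ for $\varepsilon_\vartheta=\vartheta^{1/2}$. This is exactly why your window $\vartheta^{2/3}\ll\delta_\vartheta\ll\vartheta^{1/3}$ is the right one. A two-sided Lipschitz estimate for $\Thi$, as you propose, is more than needed; with it the lower constraint $\delta_\vartheta\gg\vartheta^{2/3}$ would become superfluous.

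\textbf{Step 1.} You need the pointwise condition $w(s+j/\beta)\neq w(s)$ for all $s$ and $1\le j<\beta$, which the braid supplies. It is not enough that $w$ fails to be $1/\beta$-periodic. Also, $\Thi\ge c\varepsilon$ still requires the chord analysis of \cref{prop:braid-recovery}. The delicate chords join different strands at nearly equal angles, where the normal separation $\varepsilon d_*$ must dominate an $O(\varepsilon^2)$ error.

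\textbf{Step 4.} There is no Lagrange multiplier, since $\Fbar$ is unconstrained. Formula \eqref{eq:Fbar-first-general} with $|K|\equiv\kappa$ gives exactly $\tfrac12D\Fbar(u)[h]=\int_\T(K\cdot h''-\kappa^2T\cdot h')\,ds$: the $\tfrac32\kappa^2$ from $\Eb$ is offset by $-\tfrac12\Eb(u)$ from $\Length$. Hence $K''+\kappa^2K=0$ distributionally. Your integration constant $a$ vanishes simply by integrating over $\T$. The coefficient must be exactly $\kappa^2$, because for any other constant there is no $1$-periodic solution with $|u'|=1$ and $|u''|=\kappa$.
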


The proof has two quantitative ingredients.  First, fibrewise contraction
of a fixed minimal closed braid gives smooth representatives $v_\rho\in\K$
with
\begin{equation}\label{eq:intro-recovery}
  \Fbar(v_\rho)-(2\pi\beta)^2=O(\rho^2),
  \qquad
  \Rop(v_\rho)=O(\rho^{-1}),
\end{equation}
where $\Fbar(\gamma):=\Length(\gamma)\Eb(\gamma)$ is the normalized,
scale-invariant bending energy.  The qualitative convergence of shrinking
braids to a multiply covered circle is discussed in
\cite[Section~2.3]{DiaoErnstReiter2021}; the rates in
\eqref{eq:intro-recovery} require a separate proof and are established in
\cref{sec:braid-recovery} below.

Second, if a unit loop $\gamma$ has thickness $\Delta$,
in short $\Thi(\gamma)=\Delta$, then for every
fixed smooth variation $h$ there is a constant $c_{h}>0$ such that
\begin{equation}\label{eq:intro-stability}
  \Thi(\gamma+r\varepsilon h)\ge c_h\Delta
  \quad\text{for }|r|\le1,
  \qquad |\varepsilon|\le c_h\Delta.
\end{equation}
This is a finite geometric estimate; no derivative of thickness or
ropelength occurs.  The 
energy decay of the minimizers $\gamma_\vartheta$ implies
$\Delta[\gamma_\vartheta]\gtrsim\vartheta^{1/3}$.  Choosing
$\varepsilon_\vartheta=\vartheta^{1/2}$
makes $\gamma_\vartheta\pm\varepsilon_\vartheta h$ admissible while the
ropelength contribution, divided by $\varepsilon_\vartheta$, tends to zero.
The two signs yield %
that the elastic limit {satisfies the Euler--Lagrange equation
for $\Fbar$}.
Combined with constant curvature, free stationarity gives a linear ODE for
the curvature vector and hence \eqref{eq:covered-circle-intro}.

This argument provides two useful shortcuts.  It avoids both a contact-force
or contact-graph analysis and any first variation of the nonsmooth
ropelength functional.  It also avoids invoking the classification or
stability theory of closed elasticae: after saturation of
\eqref{eq:bending-lower-intro}, the Euler--Lagrange equation becomes linear.

The proof is organized as follows.  We collect notation and standard facts
in \cref{sec:preliminaries}, {report on the} 
existence of regularized minimizers in
\cref{sec:existence}, and prove the quantitative closed-braid recovery in
\cref{sec:braid-recovery}.  The energy and thickness scales and strong
$H^2$-convergence are derived in \cref{sec:saturation}.  The derivative-free
finite-competitor argument occupies \cref{sec:finite-competitors}.  The
classification and the proof of \cref{thm:main-intro} are completed in
\cref{sec:classification}.

\subsection*{Usage of LLM}

Part of the work leading to this article is assisted by ChatGPT. All mathematical validation, final proof decisions, and final wording remain the sole responsibility of the human authors.

\subsection*{Acknowledgement}

The second author’s work is partially funded by the Excellence Initiative of the German federal and state governments.

\section{Geometric and variational preliminaries}\label{sec:preliminaries}

\subsection{Regular curves and bending energy}

Throughout, $\T=\R/\mathbb Z$ is equipped with the intrinsic distance
\[
  |s-t|_{\T}:=\min_{k\in\mathbb Z}|s-t+k|.
\]
For a regular curve $\gamma\in H^2(\T,\R^3)$, let
\begin{align}
  \Length(\gamma)&:=\int_{\T}|\gamma'(t)|\,dt,\label{eq:length}\\
  \Eb(\gamma)&:=\int_{\T}
  \frac{|\gamma'(t)\times\gamma''(t)|^2}{|\gamma'(t)|^5}\,dt,
  \label{eq:bending-general}\\
  \Fbar(\gamma)&:=\Length(\gamma)\Eb(\gamma).
  \label{eq:normalized-bending}
\end{align}
The functional $\Fbar$ is invariant under regular reparametrizations,
translations, rotations, and dilations.  If $\gamma$ is parametrized by
arclength and has length one, then
\begin{equation}\label{eq:unit-bending}
  \Fbar(\gamma)=\Eb(\gamma)=\int_{\T}|\gamma''(s)|^2\,ds.
\end{equation}

The total curvature is
\begin{equation}\label{eq:total-curvature}
  \TC(\gamma):=\int_\gamma\kappa\,ds
  =\int_{\T}\frac{|\gamma'(t)\times\gamma''(t)|}{|\gamma'(t)|^2}\,dt.
\end{equation}
Cauchy--Schwarz gives the scale-invariant estimate
\begin{equation}\label{eq:CS-total-curvature}
  \TC(\gamma)^2\le \Fbar(\gamma).
\end{equation}
For a unit-speed unit loop, equality holds in
\eqref{eq:CS-total-curvature} if and only if $|\gamma''|$ is constant
almost everywhere.

Let
\[
  H^2_{\mathrm{reg}}\equiv{}
  H^2_{\mathrm{reg}}(\T,\R^3)
  :=\{\gamma\in H^2(\T,\R^3):\min_{\T}|\gamma'|>0\}.
\]
This is open in $H^2$, because $H^2(\T)\hookrightarrow C^1(\T)$.

\begin{lemma}[Smoothness and first variation of $\Fbar$]
\label{lem:Fbar-smooth}
The functional $\Fbar$ is smooth on
$H^2_{\mathrm{reg}}(\T,\R^3)$.  If $u$ is a unit-speed unit loop, and
$T:=u'$, $K:=u''$, then for every $h\in H^2(\T,\R^3)$,
\begin{equation}\label{eq:Fbar-first-general}
  D\Fbar(u)[h]
  =\Eb(u)\int_{\T}T\cdot h'\,ds
   +2\int_{\T}K\cdot h''\,ds
   -3\int_{\T}|K|^2T\cdot h'\,ds.
\end{equation}
If in addition $|K|\equiv\lambda$, then
\begin{equation}\label{eq:Fbar-first-constant}
  D\Fbar(u)[h]
  =2\int_{\T}\bigl(K\cdot h''-\lambda^2T\cdot h'\bigr)\,ds.
\end{equation}
\end{lemma}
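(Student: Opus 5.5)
The plan is to write $\Fbar$ as a composition of smooth maps and then differentiate the two factors $\Length$ and $\Eb$ separately at a unit-speed unit loop. For smoothness, note that $H^2(\T,\R^3)\hookrightarrow C^1(\T,\R^3)$ continuously. Hence the map $\gamma\mapsto(\gamma',\gamma'')$ is bounded linear from $H^2$ into $C^0\times L^2$. On $H^2_{\mathrm{reg}}$, $|\gamma'|$ is bounded below by a positive constant, locally uniformly in $H^2$.

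The integrand of $\Length$ is $|\gamma'|$. This is a smooth function of $\gamma'$ on the set $\{|\gamma'|>0\}$, and the corresponding Nemytskii operator $C^0\to C^0$ is smooth, so $\Length$ is smooth. The integrand of $\Eb$ has the form
\[
\Phi(\gamma')\,|\gamma'\times\gamma''|^2,\qquad \Phi(p)=|p|^{-5}.
\]
It is smooth in $\gamma'$, so smooth as a map into $C^0$ locally uniformly, and it is quadratic in $\gamma''$. Integrating the product of a $C^0$ coefficient with an $L^2\times L^2$ bilinear expression defines a smooth map. This is the only point needing care: the $\gamma''$-dependence is polynomial, and this is what makes $L^2$ sufficient. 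Hence $\Eb$, and therefore $\Fbar=\Length\,\Eb$, is smooth.

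For the first variation at $u$ with $|u'|\equiv1$ and $\Length(u)=1$, we have
\[
D\Fbar(u)[h]=\Eb(u)\,D\Length(u)[h]+D\Eb(u)[h].
\]
First, $D\Length(u)[h]=\int T\cdot h'$. For $\Eb$, differentiate each factor at $\gamma'=T$, $\gamma''=K$, using $T\cdot K=0$ a.e., which follows from $|u'|\equiv1$.
\begin{itemize}
\item Since $D|p|^{-5}[q]=-5|p|^{-7}p\cdot q$, the prefactor contributes $-5|K|^2\,T\cdot h'$.
\item $|T\times K|^2=|K|^2$, and its variation is $2(T\times K)\cdot(h'\times K+T\times h'')$. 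By the identity $(a\times b)\cdot(c\times d)=(a\cdot c)(b\cdot d)-(a\cdot d)(b\cdot c)$ together with $T\cdot K=0$, this equals
\[
2\bigl(|K|^2\,T\cdot h'-0\bigr)+2\bigl(K\cdot h''-(T\cdot h'')(K\cdot T)\bigr)=2|K|^2\,T\cdot h'+2K\cdot h''.
\]
\end{itemize}
Summing gives $D\Eb(u)[h]=2\int K\cdot h''-3\int|K|^2\,T\cdot h'$, which yields \eqref{eq:Fbar-first-general}.

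If $|K|\equiv\lambda$, then $\Eb(u)=\lambda^2$. The first and third terms of \eqref{eq:Fbar-first-general} combine to $(\lambda^2-3\lambda^2)\int T\cdot h'=-2\lambda^2\int T\cdot h'$, giving \eqref{eq:Fbar-first-constant}. The main obstacle is only the functional-analytic smoothness in the $L^2$ variable $\gamma''$, handled by the polynomial structure noted above; the rest is routine algebra.
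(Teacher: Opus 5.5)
Your proof is correct and follows essentially the paper's route: smoothness comes from $H^2\hookrightarrow C^1$ together with the quadratic dependence on $\gamma''\in L^2$, and the first variation from the product rule for $\Length\cdot\Eb$. The only cosmetic difference is that the paper differentiates the equivalent form $\int|P^\perp_{T_\gamma}\gamma''|^2v^{-3}\,dt$. There the $-3$ comes directly from $v^{-3}$ and the tangential variation of the projection drops out, whereas you obtain $-3=-5+2$ from the cross-product form via the Lagrange identity.
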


\begin{proof}
For $\gamma\in H^2_{\mathrm{reg}}$, set
$v:=|\gamma'|$, $T_\gamma:=\gamma'/v$, and
$P_{T_\gamma}^\perp:=I-T_\gamma\otimes T_\gamma$.  Formula
\eqref{eq:bending-general} can be written as
\begin{equation}\label{eq:bending-projection}
  \Eb(\gamma)
  =\int_{\T}\frac{|P_{T_\gamma}^\perp\gamma''|^2}{v^3}\,dt.
\end{equation}
The maps $\gamma\mapsto v^{-1}$ and
$\gamma\mapsto P_{T_\gamma}^\perp$ are smooth on the open set where
$\min|\gamma'|>0$.  Since $H^1(\T)$ is a Banach algebra and
$H^2(\T)\hookrightarrow C^1(\T)$, the right-hand sides of
\eqref{eq:length} and \eqref{eq:bending-projection} define smooth maps
on $H^2_{\mathrm{reg}}$.  In particular, the second derivative of
$\Fbar$ is locally bounded; this observation will justify the quadratic
Taylor estimate used in \cref{prop:braid-recovery}.

At a unit-speed curve {$u$} one has
\begin{equation}\label{eq:length-variation}
  D\Length(u)[h]=\int_{\T}T\cdot h'\,ds.
\end{equation}
Moreover, $K\cdot T=0$, and differentiating
\eqref{eq:bending-projection} gives
\begin{equation}\label{eq:bending-variation}
  D\Eb(u)[h]
  =2\int_{\T}K\cdot h''\,ds
   -3\int_{\T}|K|^2T\cdot h'\,ds.
\end{equation}
Indeed, the variation of $P_{T_u}^\perp K$ contributes only a tangential
vector and hence has zero scalar product with $K$.  Combining
\eqref{eq:length-variation} and \eqref{eq:bending-variation} with the
product rule proves \eqref{eq:Fbar-first-general}.  If $|K|=\lambda$ and
$\Length(u)=1$, then $\Eb(u)=\lambda^2$, and
\eqref{eq:Fbar-first-constant} follows.
\end{proof}

\subsection{Thickness and ropelength}

For a regular $C^1$-curve $\gamma$, write
$T_\gamma(s):=\gamma'(s)/|\gamma'(s)|$.  For $s\ne t$, define the
tangent-point radius
\begin{equation}\label{eq:tangent-point-radius}
  r_{\mathrm{tp}}[\gamma](s,t)
  :=\frac{|\gamma(t)-\gamma(s)|^2}
  {2|P_{T_\gamma(s)}^\perp(\gamma(t)-\gamma(s))|},
\end{equation}
with value zero if $\gamma(t)=\gamma(s)$, and with value $+\infty$ if
the chord is nonzero but the denominator vanishes.  The thickness is
\begin{equation}\label{eq:thickness-global-radius}
  \Thi(\gamma):=\inf_{s\ne t}r_{\mathrm{tp}}[\gamma](s,t).
\end{equation}
For embedded $C^{1,1}$-curves this agrees with the reach and with the
normal injectivity radius.  It also admits the characterization
{(see\cite{LitherlandSimonDurumericRawdon1999})}
\begin{equation}\label{eq:thickness-dcsd}
  \Thi(\gamma)
  =\min\left\{\MinRad(\gamma),\frac12\dcsd(\gamma)\right\},
\end{equation}
where $\MinRad$ is the essential infimum of the radius of curvature and
$\dcsd$ is the infimum of the lengths of chords perpendicular to the
tangents at both endpoints.  These facts, including the 
$C^{1,1}$-regularity
implied by positive thickness, are proved in
\cite{zbMATH01743161,CantarellaKusnerSullivan2002,zbMATH01953583}.

Ropelength is
\begin{equation}\label{eq:ropelength}
  \Rop(\gamma):=\frac{\Length(\gamma)}{\Thi(\gamma)},
\end{equation}
with the convention $\Rop(\gamma)=+\infty$ if
$\Thi(\gamma)=0$.  It is invariant under reparametrization and dilation.

We shall repeatedly use the following chord--arc estimate.

\begin{lemma}[Thick chord--arc estimate]\label{lem:chord-arc}
Let $\gamma$ be a unit-speed embedded $C^{1,1}$-curve of thickness
$\Delta>0$, and let $w=|s-t|_{\T}$ be the length of its shorter
arc connecting~$\gamma(s)$ with~$\gamma(t)$. 
If $|\gamma(s)-\gamma(t)|<2\Delta$, then
\begin{equation}\label{eq:chord-arc}
  w\le 2\Delta\arcsin
  \frac{|\gamma(s)-\gamma(t)|}{2\Delta}.
\end{equation}
Consequently, for every $a\in(0,\pi]$,
\begin{equation}\label{eq:chord-arc-converse}
  w\ge a\Delta
  \quad\Longrightarrow\quad
  |\gamma(s)-\gamma(t)|\ge2\Delta\sin(a/2).
\end{equation}
\end{lemma}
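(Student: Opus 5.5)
We prove \eqref{eq:chord-arc} by comparing $\gamma$ near $\gamma(s)$ with circles of radius $\Delta$, using only the tangent-point characterization \eqref{eq:thickness-global-radius}. Without loss of generality take $s=0$, so $\gamma(0)=0$. Put $d(\sigma):=|\gamma(\sigma)|$ and consider $\sigma\in[0,w]$, where $w=|t|_\T\le 1/2$. The inequality $r_{\mathrm{tp}}[\gamma](\sigma,0)\ge\Delta$ says that $\gamma(0)$ lies outside the two open balls of radius $\Delta$ tangent to $\gamma$ at $\gamma(\sigma)$; symmetrically, $\gamma(\sigma)$ lies outside the open balls of radius $\Delta$ tangent at $\gamma(0)$.

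The first step is a differential inequality for $d$. At a point with $d(\sigma)>0$ write $d'(\sigma)=T(\sigma)\cdot\gamma(\sigma)/d(\sigma)=\cos\phi(\sigma)$, where $\phi(\sigma)$ is the angle between the tangent $T(\sigma)$ and the chord direction $\gamma(\sigma)-\gamma(0)$. By \eqref{eq:tangent-point-radius},
\[
  \Delta\le r_{\mathrm{tp}}[\gamma](\sigma,0)=\frac{d(\sigma)}{2\sin\phi(\sigma)},
\]
hence $\sin\phi(\sigma)\le d(\sigma)/(2\Delta)$. As long as $d<2\Delta$ this gives $|\cos\phi|\ge\sqrt{1-d^2/(4\Delta^2)}$. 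Near $\sigma=0$ we have $d'(\sigma)\to1$. The function $\cos\phi$ is continuous wherever $d>0$ (the curve is $C^1$), and $|\cos\phi|$ is bounded away from $0$ while $d<2\Delta$. So $d'$ cannot change sign on any interval $[0,\sigma_1]$ on which $d<2\Delta$. On such an interval,
\[
  d'\ge\sqrt{1-d^2/(4\Delta^2)},
\]
and $d$ stays positive by injectivity.

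Next, compare with the circle. Setting $d=2\Delta\sin(\psi/2)$, the inequality becomes $\Delta\psi'\ge1$, so $\psi(\sigma)\ge\sigma/\Delta$. Equivalently, $d(\sigma)\ge 2\Delta\sin(\sigma/(2\Delta))$ as long as $\sigma\le\pi\Delta$ and $d<2\Delta$. This shows that $d$ increases strictly until it first reaches $2\Delta$, which happens at some $\sigma_*\le\pi\Delta$.

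The main obstacle is to make sure that $d(w)<2\Delta$ really forces $w<\sigma_*$, i.e.\ that the curve cannot travel past distance $2\Delta$ and then return closer to $\gamma(0)$ along the shorter arc. Here I would use the symmetric role of the endpoints. Run the same argument starting from $t$ backwards along the shorter arc: the distance to $\gamma(t)$ increases monotonically until it reaches $2\Delta$. Suppose both monotone phases ended before the arcs met. Then the point where $|\gamma(\cdot)-\gamma(0)|$ attains its maximum on $[0,w]$ is interior, so there the tangent is perpendicular to the chord from $\gamma(0)$. If that maximum were below $2\Delta$, this would contradict the bound $\sin\phi\le d/(2\Delta)<1$. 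If it is at least $2\Delta$, I would instead pick a doubly critical chord: minimize $|\gamma(a)-\gamma(b)|$ over the pairs with $a$ in the first monotone phase and $b$ in the second. That produces a chord perpendicular at both ends, and the characterization \eqref{eq:thickness-dcsd} forces its length to be at least $2\Delta$. From this one derives a contradiction with $d(w)<2\Delta$. This step needs care; if it becomes messy I would instead cite the known reach/normal-injectivity property, namely that $\gamma$ never re-enters the ball $B_{2\Delta}(\gamma(0))$ along an arc after leaving it.

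Once monotonicity up to $w$ is established, inverting $d(w)\ge2\Delta\sin(w/(2\Delta))$ (the sine is increasing on $[0,\pi/2]$) gives \eqref{eq:chord-arc}. The consequence \eqref{eq:chord-arc-converse} follows by contraposition. If $|\gamma(s)-\gamma(t)|<2\Delta\sin(a/2)\le2\Delta$, then \eqref{eq:chord-arc} gives
\[
  w\le 2\Delta\arcsin\bigl(|\gamma(s)-\gamma(t)|/(2\Delta)\bigr)<2\Delta\cdot a/2=a\Delta,
\]
using $a/2\le\pi/2$.
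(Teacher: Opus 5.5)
Your contrapositive derivation of \eqref{eq:chord-arc-converse} from \eqref{eq:chord-arc} is exactly the paper's. For \eqref{eq:chord-arc} itself the paper gives no argument and simply cites \cite[Lemma~5]{CantarellaKusnerSullivan2002}. Your local analysis is sound. Put $s=0$ and $f:=|\gamma(\cdot)-\gamma(0)|$. The identity $r_{\mathrm{tp}}[\gamma](\sigma,0)=f(\sigma)/(2\sin\phi(\sigma))$ shows that $f$ has no critical point where $0<f<2\Delta$. Moreover $f(\sigma)\ge2\Delta\sin(\sigma/(2\Delta))$ until $f$ first reaches $2\Delta$, which happens by $\sigma=\pi\Delta$.

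The genuine gap is the global step you flag, and the repair you sketch cannot work. It never uses that $w$ is the \emph{shorter} arc, nor any relation between $\Delta$ and the total length. Yet \eqref{eq:chord-arc} is false for the longer arc, already for the round circle of length one with $t$ close to $s$. Concretely, the minimizing pair over your two monotone phases need not be doubly critical: it can be the endpoint pair itself. Take the longer arc of a stadium curve between two nearby points in the middle of a long straight segment. Both phases are straight pieces of length $2\Delta$, the minimum is attained only at the pair $(s,t)$, and nothing contradicts $d(w)<2\Delta$. Your fallback (``$\gamma$ never re-enters $B_{2\Delta}(\gamma(0))$ along an arc after leaving it'') is false as stated, since every arc of a closed curve that leaves the ball eventually returns to $\gamma(0)$. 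Restricted to the shorter arc, it is essentially the lemma being proved.

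The missing idea is to follow $f$ past $t$ onto the complementary arc. Suppose $f\ge2\Delta$ somewhere on $[0,w]$, and let $\sigma^*$ be the last such parameter; then $\sigma^*<w$ and $f(\sigma^*)=2\Delta>f(w)$. By injectivity $f>0$ on $(0,1)$, and $f'$ is continuous and nonvanishing wherever $0<f<2\Delta$. Hence $f$ is strictly decreasing and stays below $2\Delta$ on all of $(\sigma^*,1)$: it can neither turn around nor vanish before $\sigma=1$. Running your comparison backwards from $\gamma(1)=\gamma(0)$ then gives $1-\sigma^*\le\pi\Delta$. On the other hand, $\kappa\le1/\Delta$ a.e.\ together with Fenchel's theorem gives $2\pi\le\TC(\gamma)\le1/\Delta$, i.e.\ $\pi\Delta\le1/2$. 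But $1-\sigma^*>1-w\ge1/2$, a contradiction. So $f<2\Delta$ on $[0,w]$, hence $w<\pi\Delta$, and your inversion yields \eqref{eq:chord-arc}. With this step added, your argument is a self-contained replacement for the citation; without it, the first assertion is not proved.
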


\begin{proof}
The first assertion is \cite[Lemma~5]{CantarellaKusnerSullivan2002}.
If the conclusion of \eqref{eq:chord-arc-converse} failed, then
\eqref{eq:chord-arc}, together with the strict monotonicity of
$\arcsin$ on $[0,1]$, would give $w<a\Delta$, a contradiction.
\end{proof}

\subsection{Knot classes, unit loops, and BB knots}

Let $\K$ be a one-component tame knot class.  We work with the based
unit loops
\begin{equation}\label{eq:unit-loop-class}
  \Cclass(\K):=
  \left\{\gamma\in H^2(\T,\R^3):
  \begin{array}{l}
  \gamma(0)=0,\quad |\gamma'|\equiv1,\\[-1mm]
  \gamma\text{ is embedded and represents }\K
  \end{array}\right\}.
\end{equation}
Every curve in this set has length one.  The base-point condition only
removes translations and has no geometric significance.

The bridge index is denoted by $\beta(\K)$ and the braid index by
$\operatorname{braid}(\K)$.  We call $\K$ a one-component BB knot class
if
\[
  \beta(\K)=\operatorname{braid}(\K).
\]
For a one-component knot, reversal of orientation does not change the
braid index; cf.~\cite[p.~2150075-3, last sentence]{DiaoErnstReiter2021}.
For links, the unoriented convention in
\cite[p.~2150075-3, last paragraph]{DiaoErnstReiter2021} involves a minimum over component orientations;
we do not treat that case here.

\begin{lemma}[Ordinary bridge lower bound]\label{lem:ordinary-bridge}
If $\gamma\in\Cclass(\K)$, then
\begin{equation}\label{eq:ordinary-bridge}
  \TC(\gamma)\ge2\pi\beta(\K),
  \qquad
  \Eb(\gamma)\ge\bigl(2\pi\beta(\K)\bigr)^2.
\end{equation}
\end{lemma}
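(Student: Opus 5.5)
The plan is to obtain the total curvature bound from Milnor's crookedness characterization of the bridge index, and then get the bending bound from the Cauchy--Schwarz inequality \eqref{eq:CS-total-curvature}.

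\emph{Step 1: the unit-speed curve is regular enough.} Let $\gamma\in\Cclass(\K)$. Then $\gamma$ is a unit-speed $H^2$-curve, so it is $C^1$ with absolutely continuous tangent $T=\gamma'$ and $|\gamma''|\in L^2\subset L^1$. Hence the total curvature $\TC(\gamma)=\int_\T|\gamma''|\,ds$ is finite. It coincides with Milnor's total curvature, defined as the supremum of the total turning of inscribed polygons. For $C^1$-curves with absolutely continuous tangent this identification is standard: the exterior angles of fine inscribed polygons converge to the length of the tangent indicatrix $T\colon\T\to S^2$, and that length equals $\int|T'|$.

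\emph{Step 2: the total curvature bound.} Milnor's theorem \cite{Milnor1950} describes the crookedness: for almost every direction $e\in S^2$, the number $\mu(e)$ of local maxima of the height function $s\mapsto e\cdot\gamma(s)$ satisfies
\[
  \TC(\gamma)=\pi\int_{S^2}\mu(e)\,\frac{dA(e)}{4\pi}\cdot 2 .
\]
Equivalently, $\TC(\gamma)$ equals $2\pi$ times the average of $\mu$ over $S^2$. This follows from the integral-geometric Crofton formula applied to the tangent indicatrix, whose length is $\TC(\gamma)$: a great circle $e^\perp$ meets $T$ exactly at the critical points of $e\cdot\gamma$, and their number is $2\mu(e)$.

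Fix a direction $e$ for which $e\cdot\gamma$ has finitely many critical points that are all strict extrema. Since $\gamma$ is embedded and represents $\K$, a small smoothing, or an appeal to tameness, shows that $\gamma$ is isotopic to a knot presented with $\mu(e)$ maxima in the $e$-direction. Hence $\mu(e)\ge\beta(\K)$ for almost every $e$, and integrating gives $\TC(\gamma)\ge2\pi\beta(\K)$.

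The main obstacle is the genericity/tameness step in Step 2: $\gamma$ is only $C^{1,1/2}$, and $\beta$ is defined for tame polygonal or smooth representatives. I would handle this with Milnor's polygonal framework directly. Choose an inscribed polygon $P$ fine enough that, by the thickness and chord--arc control of \cref{lem:chord-arc} (which applies when $\gamma$ is thick) or by uniform $C^1$ closeness, $P$ is isotopic to $\gamma$, hence represents $\K$. Milnor's polygonal result then gives $\TC(P)\ge2\pi\beta(\K)$, and $\TC(\gamma)=\sup_P\TC(P)\ge\TC(P)$. The required fact is that sufficiently fine inscribed polygons of an embedded $C^1$-curve are ambient isotopic to it; this is standard via a tubular-neighbourhood argument for $C^1$ embeddings.

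\emph{Step 3: the bending bound.} Since $|\gamma'|\equiv1$ and the length is one, \eqref{eq:CS-total-curvature} together with \eqref{eq:unit-bending} gives
\[
  \Eb(\gamma)=\Fbar(\gamma)\ge\TC(\gamma)^2\ge(2\pi\beta(\K))^2 .
\]
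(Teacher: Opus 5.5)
Your proof is correct, but it handles the passage from smooth to $H^2$ curves differently from the paper. The paper approximates $\gamma$ from outside by smooth regular curves $\gamma_j\to\gamma$ in $H^2$. It uses $C^1$ isotopy stability to keep $\gamma_j$ in $\K$, applies Milnor's inequality to the smooth $\gamma_j$, and gets $\TC(\gamma_j)\to\TC(\gamma)$ from the $L^1$-continuity of the curvature integrand.

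You approximate from inside by inscribed polygons and use that Milnor's total curvature is a supremum. So only the one-sided bound $\TC(\gamma)\ge\TC(P)$ is required, and no continuity of $\TC$ enters. In exchange you need two facts the paper avoids:
\begin{itemize}
\item fine inscribed polygons of a $C^1$ embedding are ambient isotopic to it (a Crowell--Fox type fact, which is correct);
\item $\int_\T|\gamma''|\,ds\ge\TC(P)$ for \emph{every} inscribed polygon $P$.
\end{itemize}
The second is the direction of the identification you actually use, and your sketch about fine polygons does not establish it by itself. It needs either Milnor's monotonicity under refinement together with a genuine convergence proof, or a reference to the treatment of finite total curvature curves. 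Alternatively it follows from your own Crofton discussion: for a.e.\ $e$, a strict local maximum of the sampled values $e\cdot P$ forces a local maximum of $e\cdot\gamma$ in the adjacent open parameter interval. Hence the number of maxima of $e\cdot P$ is at most that of $e\cdot\gamma$, and integrating gives $\TC(P)\le\TC(\gamma)$.

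As for what each route buys: the paper's stays in the smooth category with elementary analytic inputs, and it reuses the isotopy stability needed later anyway. Yours is closer to Milnor's original setting and to the finite-total-curvature reference the paper cites as a shortcut, and it applies verbatim to any embedded $C^1$ curve of finite total curvature.

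Two small corrections. First, \cref{lem:chord-arc} is not available here, since elements of $\Cclass(\K)$ need not have positive thickness. Second, inscribed polygons are not $C^1$-close to $\gamma$; only their edge directions are uniformly close to the tangent, and that is what the isotopy argument uses. The generic-direction paragraph of your Step 2 is superseded by the polygonal argument and can be dropped.
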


\begin{proof}
For a smooth embedded representative, the first inequality is Milnor's
bridge-total-curvature inequality \cite{Milnor1950}.
Exploiting the fact that any $H^{2}$ curve is of finite total curvature,
the first inequality follows from~\cite[Cor.~4.4]{zbMATH05282830}.
For completeness, we give an independent proof. Let $\gamma\in\Cclass(\K)$ merely be of class $H^2$.  Periodic smoothing
and an arbitrarily small regular correction produce smooth regular curves
$\gamma_j\to\gamma$ strongly in $H^2$ and hence in $C^1$.  Since $\gamma$
is a $C^1$-embedding, all sufficiently large $\gamma_j$ are embedded and
ambient isotopic to $\gamma$ due to the isotopy stability theorem; %
see~\cite[Lemma~3$\cdot$2]{DiaoErnstJanseVanRensburg1998} as well as \cite[Theorem~A]{BlattGilsbachReiterVonderMosel2025} and references therein.
Furthermore, on a $C^1$-neighborhood on
which the speed is bounded away from zero, the integrand in
\eqref{eq:total-curvature} is continuous under strong $H^2$-convergence
in $L^1$.  Hence $\TC(\gamma_j)\to\TC(\gamma)$, and the first inequality
passes to the limit.  The second follows from
\eqref{eq:CS-total-curvature} and \eqref{eq:unit-bending}.
\end{proof}

\section{Regularized minimizers and elastic limits}\label{sec:existence}

For $\vartheta>0$, define on $\Cclass(\K)$
\begin{equation}\label{eq:Etheta}
  E_\vartheta(\gamma)
  :=\Eb(\gamma)+\vartheta\Rop(\gamma).
\end{equation}

For every tame knot class $\K$ and every $\vartheta>0$, the functional
$E_\vartheta$ has a global minimizer
$\gamma_\vartheta\in\Cclass(\K)$ by
\cite[Theorem~2.1]{GerlachReiterVonderMosel2017}.

\begin{definition}[Elastic knot]\label{def:elastic-knot}
Let $\vartheta_j\downarrow0$, and for each $j$ let
$\gamma_{\vartheta_j}$ be a global minimizer of $E_{\vartheta_j}$.
Every weak $H^2$- and strong $C^1$-subsequential limit is called an
\emph{elastic knot} for $\K$.
\end{definition}

Such subsequential limits exist, see~\cite[Theorem~2.2]{GerlachReiterVonderMosel2017}. Indeed, comparison with any fixed smooth unit
representative gives a uniform bending-energy bound, and therefore an
$H^2$-bound.  They need not be embedded,
in fact, if $\K$ is nontrivial, they exhibit self-intersections, cf.~\cite[Prop.~3.1]{GerlachReiterVonderMosel2017}.  This loss of embeddedness is
the reason that \cref{ass:GFM} is needed.

\section{Quantitative recovery by thin closed braids}
\label{sec:braid-recovery}

We now prove the topology-dependent part of the argument.  The conclusion
holds for every one-component knot class with braid index $n$, independently
of its bridge index.  The BB condition will enter only when the limiting
bending energy $(2\pi n)^2$ is compared with the bridge lower bound.

For $n\in\mathbb N$, define the unit-length $n$-covered circle
\begin{equation}\label{eq:un-covered-circle}
  u_n(t):=R_n\bigl(e_r(2\pi nt)-e_r(0)\bigr),
  \qquad
  R_n:=\frac{1}{2\pi n},
  \qquad
  e_r(\theta):=(\cos\theta,\sin\theta,0).
\end{equation}
Then $|u_n'|\equiv1$, $|u_n''|\equiv2\pi n$, and
\begin{equation}\label{eq:circle-energy}
  \Fbar(u_n)=\Eb(u_{n})=(2\pi n)^2.
\end{equation}

\begin{lemma}[Stationarity of a multiply covered circle]
\label{lem:circle-stationary}
For every $n\in\mathbb N$ and every $h\in H^2(\T,\R^3)$,
\begin{equation}\label{eq:circle-stationary}
  D\Fbar(u_n)[h]=0.
\end{equation}
\end{lemma}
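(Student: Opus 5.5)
We plan to prove the statement by evaluating the constant-curvature first-variation formula \eqref{eq:Fbar-first-constant} of \cref{lem:Fbar-smooth} directly on $u_n$ and integrating by parts. The curve $u_n$ is a unit-speed unit loop with $|u_n''|\equiv 2\pi n=:\lambda$, so the lemma applies and gives
\[
  D\Fbar(u_n)[h]=2\int_{\T}\bigl(K\cdot h''-\lambda^2T\cdot h'\bigr)\,ds,
  \qquad T=u_n',\quad K=u_n''.
\]
The key observation is that $u_n$ satisfies the linear ODE $K'=u_n'''=-\lambda^2 u_n'=-\lambda^2T$. This is immediate from the explicit formula \eqref{eq:un-covered-circle}: differentiating $e_r(2\pi n t)$ three times produces the factor $-(2\pi n)^2$ times its first derivative.

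Since $h\in H^2$, the function $h'$ lies in $H^1(\T)$, and $K$ is smooth and periodic. A single integration by parts on $\T$ therefore has no boundary terms and gives
\[
  \int_{\T}K\cdot h''\,ds=-\int_{\T}K'\cdot h'\,ds=\lambda^2\int_{\T}T\cdot h'\,ds.
\]
This exactly cancels the second term, so $D\Fbar(u_n)[h]=0$.

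The argument has no real obstacle. The one point needing care is the justification of the integration by parts for $h\in H^2$ rather than smooth $h$. We handle it either by noting that $h'$ is absolutely continuous, or by density of smooth functions together with the continuity of $D\Fbar(u_n)$ on $H^2$, which \cref{lem:Fbar-smooth} provides.

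As an independent check, one could also argue by invariance. Scale invariance gives $D\Fbar(u_n)[u_n]=0$, and rotations and translations give zero derivative as well. This does not cover all $h$, however, so the integration-by-parts computation is the actual proof.
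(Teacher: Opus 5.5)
Your proof is correct and follows essentially the same route as the paper: both insert $u_n$ into \eqref{eq:Fbar-first-constant} and integrate by parts on $\T$. The only difference is that the paper integrates by parts twice, using $K''+\lambda^2K=0$ and $T'=K$, while you integrate once using the equivalent first-order identity $K'=-\lambda^2T$; this is cosmetic.
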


\begin{proof}
Set $\lambda_n:=2\pi n$, $T:=u_n'$, and $K:=u_n''$.  Directly from
\eqref{eq:un-covered-circle},
\[
  K''+\lambda_n^2K=0,
  \qquad T'=K.
\]
By \eqref{eq:Fbar-first-constant} and periodic integration by parts,
\[
  \frac12D\Fbar(u_n)[h]
  =\int_\T(K''+\lambda_n^2K)\cdot h\,ds=0.
\]
\end{proof}

For torus knots, the corresponding bending and ropelength estimates are
\cite[Lemma~5.1]{GerlachReiterVonderMosel2017} and
\cite[Proposition~5.2]{GerlachReiterVonderMosel2017}, respectively.  The
next result extends both to arbitrary closed braids.

\begin{proposition}[Thin closed-braid recovery]
\label{prop:braid-recovery}
Let $\K$ be a one-component knot class of braid index $n$.  There exist
$\rho_0,c,C>0$ and smooth curves
$v_\rho\in\Cclass(\K)$, $0<\rho\le\rho_0$, such that
\begin{align}
  v_\rho&\longrightarrow u_n
  &&\text{smoothly as }\rho\downarrow0,
  \label{eq:braid-smooth-convergence}\\
  |\Fbar(v_\rho)-(2\pi n)^2|&\le C\rho^2,
  \label{eq:braid-bending-rate}\\
  \Thi(v_\rho)&\ge c\rho,
  \qquad
  \Rop(v_\rho)\le C\rho^{-1}.
  \label{eq:braid-rope-rate}
\end{align}
The constants may depend on the chosen geometric braid representative of
$\K$.
\end{proposition}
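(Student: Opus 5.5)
We take a closed braid representing $\K$ with $n$ strands and place it in the solid torus around the unit-speed $n$-covered circle $u_n$, then shrink it in the fibre directions at scale $\rho$. To set this up, choose smooth $1$-periodic functions $a,b\colon\R\to\R$ with $(a(t+1/n),b(t+1/n))$ a permutation of strands, more precisely a smooth map $w=(a,b)\colon\T\to\R^2$ such that the $n$ points $w(t+k/n)$, $k=0,\dots,n-1$, are pairwise distinct for every $t$. We also require that the curve
\[
  \Gamma(t)=\bigl(1+\delta a(t)\bigr)e_r(2\pi n t)+\delta b(t)e_3
\]
for small fixed $\delta$ is a geometric closed braid representing $\K$. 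Such a $w$ exists because $\K$ has a closed-braid representative with $n$ strands (Alexander), which after isotopy inside the braid solid torus can be taken smooth with strands given as graphs over the angle. Then define, with $R_n=1/(2\pi n)$,
\[
  \tilde v_\rho(t):=R_n\Bigl[\bigl(1+\rho a(t)\bigr)e_r(2\pi nt)+\rho b(t)e_3-e_r(0)\Bigr].
\]
For $0<\rho\le\rho_0$ this is a closed braid isotopic to $\Gamma$ through closed braids, hence it represents $\K$. Let $v_\rho$ be its constant-speed reparametrization rescaled to unit length and translated so that $v_\rho(0)=0$. The map $\rho\mapsto\tilde v_\rho$ is smooth into $C^\infty$ with $\tilde v_0=u_n$. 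Arclength reparametrization and rescaling are smooth operations on regular curves, so \eqref{eq:braid-smooth-convergence} follows.

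For the energy rate we use scale and reparametrization invariance: $\Fbar(v_\rho)=\Fbar(\tilde v_\rho)$. Since $\rho\mapsto\tilde v_\rho\in H^2_{\mathrm{reg}}$ is smooth and $\Fbar$ is smooth there by \cref{lem:Fbar-smooth}, Taylor expansion gives
\[
  \Fbar(\tilde v_\rho)=\Fbar(u_n)+\rho\,D\Fbar(u_n)[\partial_\rho\tilde v_\rho|_{\rho=0}]+O(\rho^2).
\]
The remainder is controlled by the locally bounded second derivative along the compact path. The linear term vanishes by \cref{lem:circle-stationary}, and $\Fbar(u_n)=(2\pi n)^2$ by \eqref{eq:circle-energy}. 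This proves \eqref{eq:braid-bending-rate}.

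The main obstacle is the thickness bound. The length of $v_\rho$ is $1$ and $\Length(\tilde v_\rho)=1+O(\rho)$, so it suffices to show $\Thi(\tilde v_\rho)\ge c\rho$. We use \eqref{eq:thickness-dcsd}. The curvature of $\tilde v_\rho$ is $2\pi n+O(\rho)$, so $\MinRad\ge c>c\rho$. For $\dcsd$, consider a chord between $\tilde v_\rho(s)$ and $\tilde v_\rho(t)$ that is perpendicular to the tangents at both endpoints.

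If the angular positions satisfy $|2\pi n(s-t)|$ mod $2\pi\ge\eta$ for a small fixed $\eta$, then the chord has length at least of order $\eta R_n\ge c\rho$. Otherwise, $t=s+k/n+\tau$ with $|\tau|$ small.

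\emph{Case $k\ne0$ (mod $n$).} Distinct strands at nearby angles have separation $\rho R_n|w(s+k/n+\tau)-w(s)|+\ldots$. Strand separation gives $\min_{k\ne0,t}|w(t+k/n)-w(t)|=:m>0$, and the angular displacement contributes orthogonally to leading order. So the distance is at least $cR_n(\rho m+|\tau|)\ge c'\rho$.

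\emph{Case $k=0$.} The chord lies on the same strand with a small parameter gap. A chord perpendicular to both tangents on a nearly circular arc of curvature about $2\pi n$ requires a parameter gap near a half-period of the osculating circle, and this is excluded for $\eta$ small. Alternatively, we apply \eqref{eq:chord-arc}-type local graph arguments directly.

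We carry these cases out with explicit Taylor expansions in $(\rho,\tau)$, uniformly in $s$ by compactness. This gives $\dcsd\ge c\rho$, hence $\Thi(v_\rho)\ge c\rho$ and $\Rop(v_\rho)\le C\rho^{-1}$, which is \eqref{eq:braid-rope-rate}. Embeddedness, and therefore $v_\rho\in\Cclass(\K)$, follows from the same separation estimate.
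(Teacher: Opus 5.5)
Your proposal is correct and follows essentially the same route as the paper: fibre contraction $u_n+\rho h$ of a closed $n$-braid, a Taylor expansion of $\Fbar$ whose linear term vanishes by \cref{lem:circle-stationary}, and a thickness bound via \eqref{eq:thickness-dcsd}. That bound combines bounded curvature, the fact that short chords are not doubly critical, and the strand separation $\min_{k\ne0}|w(t+k/n)-w(t)|>0$ for angularly aligned points on different strands. The differences are cosmetic: you split first by angular gap and then by strand index $k$, whereas the paper splits first by parameter distance $\delta_0$ and then by the angular offset $\alpha$. Your same-strand case (a nearly circular arc cannot carry a short doubly critical chord) is made precise exactly by the paper's estimate \eqref{eq:local-chord-tangent}.
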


\begin{proof}
We separate the construction, bending estimate, and thickness estimate.
If $n=1$, take $v_\rho:=u_1$ for every $\rho$; all conclusions are
immediate after decreasing $\rho_0$ if necessary.  We may therefore
assume $n\ge2$.

\smallskip
\noindent\emph{Step 1: fibre contraction.}
Let
\[
  c(\theta):=R_ne_r(\theta),
  \qquad e_z:=(0,0,1),
\]
and choose a tube radius $r_0<R_n$.  The map
\begin{equation}\label{eq:tube-map}
  \Psi\colon(\R/2\pi\mathbb Z)\times D_{r_0}\longrightarrow\R^3,
  \qquad
  \Psi(\theta,x,y):=(R_n+x)e_r(\theta)+ye_z,
\end{equation}
is a smooth embedding.
Here $D_{r}$ denotes a two-dimensional round disk of radius $r>0$.

By the definition of braid index, $\K$ has a smooth geometric closed
$n$-braid $B$ in this solid torus.  The angular projection
$B\to\R/2\pi\mathbb Z$ is an $n$-sheeted covering.  Since $B$ is
connected, its monodromy is one $n$-cycle.  Following the knot once and
lifting the angular coordinate therefore gives a smooth function
$z\colon\R\to D_{r_0}$ of period $2\pi n$ such that
\begin{equation}\label{eq:braid-lift-separation}
  z(\theta+2\pi j)\ne z(\theta)
  \quad\text{for all }\theta\in\R,
  \quad j=1,\ldots,n-1.
\end{equation}
Put $w(t):=z(2\pi nt)$ and write $w=(x,y)$.  Then $w$ is one-periodic,
and compactness together with \eqref{eq:braid-lift-separation} gives
\begin{equation}\label{eq:d-star}
  d_*:=\min_{t\in\T}\min_{1\le j<n}
  |w(t+j/n)-w(t)|>0.
\end{equation}

After an initial harmless transverse rescaling of $B$, define
\begin{equation}\label{eq:Gamma-rho}
  \begin{aligned}
    \widetilde\Gamma_\rho(t)
    &:=(R_n+\rho x(t))e_r(2\pi nt)+\rho y(t)e_z,\\
    \Gamma_\rho(t)
    &:=\widetilde\Gamma_\rho(t)-\widetilde\Gamma_\rho(0),
    \qquad 0<\rho\le1.
  \end{aligned}
\end{equation}
For sufficiently small $\rho$ the radial coordinate is positive.  If
$\Gamma_\rho(s)=\Gamma_\rho(t)$, then
$\widetilde\Gamma_\rho(s)=\widetilde\Gamma_\rho(t)$.  Their angular
coordinates therefore agree, so $t=s+j/n$ modulo one.  The remaining
two coordinates and
\eqref{eq:d-star} imply $j=0$ and then $s=t$.  Hence every
$\Gamma_\rho$ is embedded.  The family
$\rho\mapsto\Gamma_\rho$, restricted to positive $\rho$, is an isotopy
through translates of closed $n$-braids; the isotopy extension theorem
shows that all $\Gamma_\rho$ represent $\K$.  Moreover,
$\Gamma_\rho(0)=0$.  At $\rho=0$ the family converges smoothly to $u_n$,
which is regular but nonembedded for $n>1$.

\smallskip
\noindent\emph{Step 2: quadratic bending excess.}
By \eqref{eq:Gamma-rho},
\begin{equation}\label{eq:Gamma-affine}
  \Gamma_\rho=u_n+\rho h
\end{equation}
for one fixed smooth periodic $h$.  The curves remain regular for
$|\rho|$ small.  By \cref{lem:Fbar-smooth,lem:circle-stationary}, Taylor's
formula gives
\begin{equation}\label{eq:Gamma-energy-expansion}
  \Fbar(\Gamma_\rho)
  =\Fbar(u_n)+\rho D\Fbar(u_n)[h]+O(\rho^2)
  =(2\pi n)^2+O(\rho^2).
\end{equation}

\smallskip
\noindent\emph{Step 3: linear thickness bound.}
Smooth convergence to the regular immersion $u_n$ implies uniform bounds
\begin{equation}\label{eq:Gamma-C2-bounds}
  |\Gamma_\rho'|\ge v_*>0,
  \qquad
  |\Gamma_\rho''|\le M
\end{equation}
for $0<\rho\le\rho_0$.  Thus the curvature is bounded uniformly and
\begin{equation}\label{eq:Gamma-minrad}
  \MinRad(\Gamma_\rho)\ge r_*>0.
\end{equation}

The bounds \eqref{eq:Gamma-C2-bounds} also provide a number
$\delta_0>0$, independent of $\rho$, such that no pair with
$0<|s-t|_\T<\delta_0$ is doubly critical.  Indeed, in a local parameter
chart, with $\tau=t-s$,
\begin{equation}\label{eq:local-chord-tangent}
  \bigl(\Gamma_\rho(t)-\Gamma_\rho(s)\bigr)
  \cdot\Gamma_\rho'(s)
  =\tau|\Gamma_\rho'(s)|^2+O(\tau^2),
\end{equation}
where the remainder is uniform in $s$ and $\rho$.  The right-hand side
has the sign of $\tau$ if $0<|\tau|<\delta_0$.

It remains to bound chords with $|s-t|_\T\ge\delta_0$.  Let
$\alpha\in[-\pi,\pi]$ be determined by
\[
  \alpha\equiv2\pi n(t-s)\pmod{2\pi}.
\]
Use the orthonormal frame
\[
  \bigl(e_r(2\pi ns),e_\theta(2\pi ns),e_z\bigr),
  \qquad
  e_\theta(\theta):=(-\sin\theta,\cos\theta,0).
\]
In this frame one has
\begin{align}
  \Gamma_\rho(t)-\Gamma_\rho(s)
  ={}&\bigl[(R_n+\rho x(t))\cos\alpha
       -(R_n+\rho x(s))\bigr]e_r(2\pi ns)
  \notag\\
  &+(R_n+\rho x(t))\sin\alpha\,e_\theta(2\pi ns)
  \notag\\
  &+\rho(y(t)-y(s))e_z.
  \label{eq:exact-braid-chord}
\end{align}
If $|\sin\alpha|\ge\rho$, the $e_\theta$ component yields
\begin{equation}\label{eq:chord-sine-case}
  |\Gamma_\rho(t)-\Gamma_\rho(s)|\ge c\rho.
\end{equation}
If $|\sin\alpha|<\rho$ and $\alpha$ is near $\pi$ or $-\pi$, the
radial component in \eqref{eq:exact-braid-chord} is bounded away from
zero, so \eqref{eq:chord-sine-case} remains true.  The only remaining
case is $|\alpha|\le C\rho$.

In that case there is a unique $j\in\{0,\ldots,n-1\}$ such that
$t_0:=s+j/n$ has the same angular coordinate as $s$ and
$|t-t_0|\le C\rho$.  Since $|s-t|_\T\ge\delta_0$, one has $j\ne0$ for
small $\rho$.  At $t_0$, the normal-plane part of the chord equals
$\rho(w(t_0)-w(s))$ and has length at least $\rho d_*$.  Smoothness of
$w$ and \eqref{eq:exact-braid-chord} show that replacing $t_0$ by $t$
changes this normal part by at most
\begin{equation}\label{eq:normal-error}
  C(\alpha^2+\rho|\alpha|)\le C\rho^2.
\end{equation}
Consequently,
\begin{equation}\label{eq:nonlocal-chord-rho}
  |\Gamma_\rho(t)-\Gamma_\rho(s)|
  \ge\rho d_*-C\rho^2\ge\frac{d_*}{2}\rho
\end{equation}
for sufficiently small $\rho$.

Every doubly critical chord is nonlocal by
\eqref{eq:local-chord-tangent}, and hence
$\dcsd(\Gamma_\rho)\ge c\rho$ by
\eqref{eq:chord-sine-case}--\eqref{eq:nonlocal-chord-rho}.  Combining
this with \eqref{eq:Gamma-minrad} and
\eqref{eq:thickness-dcsd} proves
\begin{equation}\label{eq:Gamma-thickness}
  \Thi(\Gamma_\rho)\ge c\rho.
\end{equation}
Since $\Length(\Gamma_\rho)$ stays bounded,
$\Rop(\Gamma_\rho)\le C/\rho$.

Finally, dilate $\Gamma_\rho$ to length one and reparametrize it by
arclength, starting at parameter value zero.  Call the result $v_\rho$.
Normalized
bending energy and ropelength are invariant under these operations, and
the dilation factors stay bounded above and below.  Thus
\eqref{eq:braid-bending-rate}--\eqref{eq:braid-rope-rate} follow from
\eqref{eq:Gamma-energy-expansion} and \eqref{eq:Gamma-thickness}.
Smooth convergence in \eqref{eq:braid-smooth-convergence} is preserved by
the normalization.
\end{proof}

\begin{remark}\label{rem:recovery-not-simultaneous}
The braid representative used in \cref{prop:braid-recovery} need not
simultaneously realize bridge position.  This is immaterial: the recovery
upper bound uses a braid-minimizing representative, while the lower bound
uses the bridge index as a knot invariant.
\end{remark}

\section{{Energy decay of minimizers, and strong convergence in $H^2$}}
\label{sec:saturation}

{We now use the comparison curves contructed in the previous section
to prove energy decay rates for the minimizing knots $\gamma_\vartheta$.}

\begin{proposition}[Energy and thickness scales]\label{prop:scales}
Let $\K$ be a one-component BB knot class with bridge index $\beta$, set
$\lambda:=2\pi\beta$, and let
$\gamma_\vartheta$ minimize $E_\vartheta$ in $\Cclass(\K)$.  Then, for
all sufficiently small $\vartheta>0$,
\begin{align}
  0\le \Eb(\gamma_\vartheta)-\lambda^2
  &\le C\vartheta^{2/3},
  \label{eq:scale-bending}\\
  \Rop(\gamma_\vartheta)
  &\le C\vartheta^{-1/3},
  \label{eq:scale-rope}\\
  \Delta_\vartheta:=\Thi(\gamma_\vartheta)
  &\ge c\vartheta^{1/3}.
  \label{eq:scale-thickness}
\end{align}
\end{proposition}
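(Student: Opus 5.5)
The plan is to compare $\gamma_\vartheta$ with the recovery curves $v_\rho$ of \cref{prop:braid-recovery}, optimize over $\rho$, and combine the result with the lower bound of \cref{lem:ordinary-bridge}. Because $\K$ is a BB knot class, the braid index equals $n=\beta$, so \cref{prop:braid-recovery} applies with $(2\pi n)^2=\lambda^2$. Each $v_\rho$ lies in $\Cclass(\K)$ and is a unit loop, so $\Eb(v_\rho)=\Fbar(v_\rho)$. Minimality of $\gamma_\vartheta$ then gives, for every $0<\rho\le\rho_0$,
\[
  \Eb(\gamma_\vartheta)+\vartheta\Rop(\gamma_\vartheta)
  \le E_\vartheta(v_\rho)
  \le \lambda^2+C\rho^2+C\vartheta\rho^{-1}.
\]
Balancing the two error terms with the choice $\rho:=\vartheta^{1/3}$ is admissible once $\vartheta\le\rho_0^3$. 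It yields
\[
  \Eb(\gamma_\vartheta)+\vartheta\Rop(\gamma_\vartheta)\le\lambda^2+2C\vartheta^{2/3}.
\]

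The lower half of \eqref{eq:scale-bending} is \cref{lem:ordinary-bridge}: $\Eb(\gamma_\vartheta)\ge(2\pi\beta)^2=\lambda^2$, since $\gamma_\vartheta\in\Cclass(\K)$. This ordinary bound is enough because $\gamma_\vartheta$ is embedded, so \cref{ass:GFM} is not needed here. Dropping the nonnegative ropelength term in the displayed inequality gives the upper half of \eqref{eq:scale-bending}.

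For \eqref{eq:scale-rope}, subtract $\Eb(\gamma_\vartheta)\ge\lambda^2$ from the same inequality to get $\vartheta\Rop(\gamma_\vartheta)\le 2C\vartheta^{2/3}$, that is, $\Rop(\gamma_\vartheta)\le 2C\vartheta^{-1/3}$. Since $\Length(\gamma_\vartheta)=1$, we have $\Thi(\gamma_\vartheta)=1/\Rop(\gamma_\vartheta)\ge(2C)^{-1}\vartheta^{1/3}$, which is \eqref{eq:scale-thickness}. Finiteness of $\Rop(\gamma_\vartheta)$ follows from the same bound, so the thickness is positive and there is no division issue.

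There is no real obstacle. The only points to check are that the BB condition is what makes the recovery energy $(2\pi n)^2$ coincide with the lower bound $\lambda^2$, and that the constants from \cref{prop:braid-recovery} depend only on a fixed braid representative of $\K$, hence not on $\vartheta$.
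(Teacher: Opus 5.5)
Your proposal is correct and follows the paper's proof essentially step for step. Like the paper, you compare with the recovery curves of \cref{prop:braid-recovery} at $n=\beta$ with $\rho=\vartheta^{1/3}$. You then use \cref{lem:ordinary-bridge} (not \cref{ass:GFM}) for the lower bound, drop or subtract terms to get \eqref{eq:scale-bending} and \eqref{eq:scale-rope}, and invert ropelength for unit loops to get \eqref{eq:scale-thickness}.
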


\begin{proof}
This is the comparison argument of
\cite[Proposition~5.4]{GerlachReiterVonderMosel2017}, with the explicit
torus-knot family replaced by the general closed-braid recovery from
\cref{prop:braid-recovery}.  Indeed, since
$\operatorname{braid}(\K)=\beta$, apply
\cref{prop:braid-recovery} with $n=\beta$ and choose
$\rho=\vartheta^{1/3}$.  Because the comparison curves have length one,
$\Eb(v_\rho)=\Fbar(v_\rho)$, and minimality gives
\begin{equation}\label{eq:comparison-scale}
  E_\vartheta(\gamma_\vartheta)
  \le E_\vartheta(v_\rho)
  \le\lambda^2+C\bigl(\rho^2+\vartheta\rho^{-1}\bigr)
  \le\lambda^2+C\vartheta^{2/3}.
\end{equation}
Applying \cref{lem:ordinary-bridge} to the embedded curve
$\gamma_\vartheta$ gives $\Eb(\gamma_\vartheta)\ge\lambda^2$.
Dropping the nonnegative
ropelength term in \eqref{eq:comparison-scale} proves
\eqref{eq:scale-bending}.  Subtracting the same lower bound from
\eqref{eq:comparison-scale} gives
\[
  \vartheta\Rop(\gamma_\vartheta)
  \le C\vartheta^{2/3},
\]
which is \eqref{eq:scale-rope}.  Since the curves have length one,
$\Delta_\vartheta=1/\Rop(\gamma_\vartheta)$, proving
\eqref{eq:scale-thickness}.
\end{proof}

\begin{proposition}[Strong convergence and constant curvature]
\label{prop:strong-convergence}
Under the assumptions of \cref{prop:scales}, suppose in addition that
\cref{ass:GFM} holds, and let
$\vartheta_j\downarrow0$.  After passing to a subsequence, there is a
unit-speed $u\in H^2(\T,\R^3)$ such that
\begin{equation}\label{eq:strong-H2}
  \gamma_{\vartheta_j}\longrightarrow u
  \quad\text{strongly in }H^2(\T,\R^3),
\end{equation}
and
\begin{equation}\label{eq:constant-curvature-limit}
  \TC(u)=\lambda,
  \qquad
  |u''|=\lambda\quad\text{a.e. on }\T,
  \qquad
  \Eb(u)=\lambda^2.
\end{equation}
\end{proposition}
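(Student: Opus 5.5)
We combine weak compactness, lower semicontinuity, the energy decay of \cref{prop:scales}, and \cref{ass:GFM}. The argument runs from compactness, to an upper bound on $\Eb(u)$, to a lower bound on $\TC(u)$, and then squeezes.

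\emph{Compactness.} By \eqref{eq:scale-bending}, $\Eb(\gamma_{\vartheta_j})=\|\gamma_{\vartheta_j}''\|_{L^2}^2$ is uniformly bounded. Since $\gamma_{\vartheta_j}(0)=0$ and $|\gamma_{\vartheta_j}'|\equiv1$, the sequence is bounded in $H^2$. We extract a subsequence converging weakly in $H^2$ and strongly in $C^1$ to some $u$. Uniform convergence of derivatives gives $|u'|\equiv1$, so $u$ is a unit-speed unit loop. By construction $u$ lies in the $C^1$-closure of $\Cclass(\K)$.

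\emph{Upper bound.} Weak lower semicontinuity of the $L^2$-norm and \eqref{eq:scale-bending} give
\[
  \Eb(u)=\|u''\|_{L^2}^2\le\liminf_j\|\gamma_{\vartheta_j}''\|_{L^2}^2=\lambda^2 .
\]

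\emph{Lower bound and squeeze.} \cref{ass:GFM} gives $\TC(u)\ge\lambda$. The unit-length Cauchy--Schwarz estimate \eqref{eq:CS-total-curvature} then yields
\[
  \lambda^2\le\TC(u)^2\le\Fbar(u)=\Eb(u)\le\lambda^2 .
\]
So every inequality is an equality: $\TC(u)=\lambda$ and $\Eb(u)=\lambda^2$. Equality in Cauchy--Schwarz forces $|u''|$ to be constant a.e. Since $\int_\T|u''|\,ds=\lambda$ (for unit speed $\TC(u)=\int_\T|u''|\,ds$), the constant is $\lambda$, which proves \eqref{eq:constant-curvature-limit}.

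\emph{Strong convergence.} We now know $\|\gamma_{\vartheta_j}''\|_{L^2}^2\to\lambda^2=\|u''\|_{L^2}^2$: the upper bound comes from \eqref{eq:scale-bending} and the lower bound from lower semicontinuity. Weak convergence $\gamma_{\vartheta_j}''\weakto u''$ in $L^2$ together with convergence of norms gives strong $L^2$-convergence, by the Hilbert-space (Radon--Riesz) argument of expanding $\|\gamma_{\vartheta_j}''-u''\|^2$. Combined with the $C^1$-convergence, this is \eqref{eq:strong-H2}.

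The only step that is not routine is the lower bound $\TC(u)\ge\lambda$. The limit $u$ is generally not embedded, so \cref{lem:ordinary-bridge} does not apply to it. This is exactly where the hypothesis \cref{ass:GFM} is used, and it requires checking that $u$ is a unit-speed $H^2$ curve in the $C^1$-closure of $\Cclass(\K)$, which the compactness step provides. The BB condition enters only through the matching upper rate \eqref{eq:scale-bending}.
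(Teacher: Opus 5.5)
Your proof is correct and follows essentially the same route as the paper: compactness from the uniform $H^2$-bound, then the squeeze $\lambda^2\le\TC(u)^2\le\Eb(u)\le\liminf_j\Eb(\gamma_{\vartheta_j})\le\limsup_j\Eb(\gamma_{\vartheta_j})\le\lambda^2$ via \cref{ass:GFM} and \eqref{eq:scale-bending}, equality in Cauchy--Schwarz for constant curvature, and weak convergence plus norm convergence for strong $H^2$-convergence. You also make explicit a step the paper leaves implicit, namely that the constant value of $|u''|$ is $\lambda$ because $\int_\T|u''|\,ds=\TC(u)=\lambda$.
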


\begin{proof}
The compactness argument is the one used in
\cite[Theorem~2.2]{GerlachReiterVonderMosel2017}, while the equality
argument below generalizes
\cite[Proposition~3.2]{GerlachReiterVonderMosel2017} from the lower bound
$4\pi$ to $2\pi\beta$.  We additionally record the resulting strong
$H^2$-convergence; cf.~\cite[Cor.~3.5]{GerlachReiterVonderMosel2017}.

The uniform bending bound and the based unit-speed normalization give a
uniform $H^2$-bound.  Thus, after taking a subsequence,
\begin{equation}\label{eq:weak-strong-prelimit}
  \gamma_{\vartheta_j}\weakto u\quad\text{in }H^2,
  \qquad
  \gamma_{\vartheta_j}\to u\quad\text{in }C^1.
\end{equation}
In particular, $|u'|\equiv1$, and $u$ belongs to the $C^1$-closure of
$\Cclass(\K)$.  By \cref{ass:GFM}, Cauchy--Schwarz, weak lower
semicontinuity, and \eqref{eq:scale-bending},
\begin{equation}\label{eq:equality-chain}
  \lambda^2
  \le\TC(u)^2
  \le\Eb(u)
  \le\liminf_{j\to\infty}\Eb(\gamma_{\vartheta_j})
  \le\limsup_{j\to\infty}\Eb(\gamma_{\vartheta_j})
  \le\lambda^2.
\end{equation}
Every inequality is therefore an equality.  Equality in Cauchy--Schwarz
gives $|u''|=\lambda$ almost everywhere.  Moreover,
\[
  \|\gamma_{\vartheta_j}''\|_{L^2}^2
  \longrightarrow\lambda^2=\|u''\|_{L^2}^2.
\]
Together with weak convergence of the second derivatives, the Hilbert
space norm criterion yields strong $L^2$-convergence of the second
derivatives.  This and \eqref{eq:weak-strong-prelimit} prove
\eqref{eq:strong-H2}.
\end{proof}

\section{Finite competitors and derivative-free removal of ropelength}
\label{sec:finite-competitors}

The next lemma is the geometric core of the proof.  It is deliberately
formulated as a finite estimate.  In particular, it neither asserts nor uses
local Lipschitz continuity of ropelength as a functional on $H^2$.

\begin{lemma}[%
Thickness stability]
\label{lem:multiplicative-thickness}
Let $h\in C^2(\T,\R^3)$ be fixed.  There are constants
$a_h,c_h,C_h{'}>0$ with the following property.  If $\gamma$ is a
unit-speed embedded $C^{1,1}$-curve of length one and thickness
$\Delta>0$, and if
\begin{equation}\label{eq:epsilon-thickness-condition}
  |\varepsilon|\le a_h\Delta,
\end{equation}
then, for every $r\in[-1,1]$, the curve
\begin{equation}\label{eq:eta-r}
  \eta_r:=\gamma+r\varepsilon h
\end{equation}
is a regular embedded $C^{1,1}$-curve ambient isotopic to $\gamma$, and
\begin{equation}\label{eq:multiplicative-thickness}
  \Thi(\eta_r)\ge c_h\Delta,
  \qquad
  \Rop(\eta_r)\le\frac{C_h'}{\Delta}.
\end{equation}
The constants are independent of $\gamma$ and $\Delta$.
\end{lemma}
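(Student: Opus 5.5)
We will verify the characterization \eqref{eq:thickness-dcsd} directly for $\eta_r$ by bounding the tangent-point radius $r_{\mathrm{tp}}[\eta_r](s,t)$ from below for every pair $s\ne t$. The estimates come from comparing with $\gamma$, splitting into short pairs and long pairs. Throughout set $\|h\|:=\|h\|_{C^2}$. Since $\Delta\le\MinRad(\gamma)$ and the length is one, we have $\Delta\le 1/(2\pi)$, so $|\varepsilon|\le a_h\Delta$ makes $|\varepsilon|$ small in absolute terms as well.

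First, regularity and curvature. We have $|\eta_r'-\gamma'|\le|\varepsilon|\|h\|\le a_h\|h\|$, so $|\eta_r'|\ge1/2$ once $a_h\|h\|\le 1/2$. Also $|\gamma''|\le 1/\Delta$ a.e., since $\MinRad\ge\Delta$, and $|\eta_r''|\le 1/\Delta+a_h\Delta\|h\|\le 2/\Delta$. Hence the curvature of $\eta_r$ is at most $C/\Delta$, and $\MinRad(\eta_r)\ge c\Delta$.

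Second, short pairs with $|s-t|_\T\le a\Delta$ for a fixed small $a$. Here we argue as in \eqref{eq:local-chord-tangent}, but at the scale $\Delta$. With $\tau=t-s$, we have $(\eta_r(t)-\eta_r(s))\cdot\eta_r'(s)=\tau|\eta_r'(s)|^2+O(\tau^2/\Delta)$, using the curvature bound $C/\Delta$. This has the sign of $\tau$ when $|\tau|\le a\Delta$ and $a$ is small. So no doubly critical chord is short. Alternatively, one can bound $r_{\mathrm{tp}}$ directly here: the normal component of the chord is at most $C\tau^2/\Delta$ and the chord is at least $c|\tau|$, which gives $r_{\mathrm{tp}}\ge c\Delta$.

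Third, long pairs with $|s-t|_\T\ge a\Delta$. This is the step we expect to be the main obstacle, since it is where the smallness $|\varepsilon|\lesssim\Delta$ must beat the nonlocal geometry. By \eqref{eq:chord-arc-converse} of \cref{lem:chord-arc}, applied with $a\le\pi$, we get $|\gamma(t)-\gamma(s)|\ge 2\Delta\sin(a/2)=:b\Delta$. The perturbation changes the chord by at most $2|\varepsilon|\|h\|_{C^0}\le 2a_h\|h\|\Delta$. Choosing $a_h\|h\|\le b/4$ therefore gives $|\eta_r(t)-\eta_r(s)|\ge (b/2)\Delta$. Every doubly critical chord of $\eta_r$ is long by the previous step, so $\dcsd(\eta_r)\ge b\Delta/2$. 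Combined with the curvature bound, \eqref{eq:thickness-dcsd} yields $\Thi(\eta_r)\ge c_h\Delta$. We use \eqref{eq:thickness-dcsd} for the $C^{1,1}$ embedded curve $\eta_r$, so embeddedness must be checked first. It follows from the same chord bounds: long pairs have nonzero chords, and short pairs are injective by the monotone tangential component. Also, $\Length(\eta_r)\le 1+|\varepsilon|\|h\|\le 2$, which gives $\Rop(\eta_r)\le C_h'/\Delta$.

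Finally, the isotopy. The family $r\mapsto\eta_r$, $r\in[-1,1]$, consists of embedded $C^{1,1}$ curves whose thickness is uniformly at least $c_h\Delta$. It is continuous in $C^1$, so it moves each curve by arbitrarily small $C^1$ steps. The isotopy stability theorem cited in \cref{lem:ordinary-bridge} then makes nearby embedded curves isotopic, and compactness of $[-1,1]$ connects $\eta_r$ to $\eta_0=\gamma$. A more direct route is also available: $|\eta_r-\gamma|\le|\varepsilon|\|h\|<c\Delta$, and the uniform thickness tube makes the straight-line homotopy an isotopy.

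All constants depend only on $\|h\|_{C^2}$ and on the fixed choice of $a$. They are independent of $\gamma$ and $\Delta$, as the statement requires.
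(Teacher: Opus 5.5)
Your proposal is correct and follows essentially the same route as the paper. Both arguments use a curvature bound of order $C/\Delta$ for $\eta_r$ and split parameter pairs at distance of order $\Delta$, where the sign of the tangential component excludes short doubly critical chords and gives local injectivity. Long chords are handled by the chord--arc estimate of \cref{lem:chord-arc} plus a $C^0$ perturbation bound; the characterization \eqref{eq:thickness-dcsd} and $C^1$-isotopy stability along the connected interval $[-1,1]$ finish the proof.
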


\begin{proof}
Positive thickness and \eqref{eq:thickness-dcsd} imply
(cf.~\cite[Lemma~2]{zbMATH01743161})
\begin{equation}\label{eq:gamma-curvature-thickness}
  \|\gamma''\|_{L^\infty}\le\Delta^{-1}.
\end{equation}
Fenchel's theorem and \eqref{eq:gamma-curvature-thickness} also give the
universal bound $\Delta\le(2\pi)^{-1}$, because $\gamma$ has length one.
In particular we have that
\begin{equation}\label{eq:delta-bound}
\Delta\le1\le\frac1\Delta.
\end{equation}
Letting
\[ 0<a_{h}\le\min\Big(1, \frac{\pi}{\|h'\|_{L^{\infty}}+1}\Big), \]%
\eqref{eq:epsilon-thickness-condition} implies,
uniformly for $|r|\le1$,
\begin{equation}\label{eq:eta-speed-curvature}
  \frac12\le|\eta_r'|\le\frac32,
  \qquad
  \|\kappa_{\eta_r}\|_{L^\infty}
  \le\frac{C_h}{\Delta}.
\end{equation}
Indeed, using~\eqref{eq:delta-bound},
{\[
|\eta_r''|
  \le
  (\Delta^{-1}+|\varepsilon|\|h''\|_{L^\infty})
  \le (\Delta^{-1}+a_{h}\Delta\|h''\|_{L^\infty})
  \le \Delta^{-1}(1+\|h''\|_{L^\infty})
\]}%
and
\[
  \kappa_{\eta_r}
  \le\frac{|\eta_r''|}{|\eta_r'|^2}
  \le {4\Delta^{-1}(1+\|h''\|_{L^\infty}) =: \frac{C_h}{\Delta}}.
\]

We next distinguish local and nonlocal parameter pairs.  Let
$w:=|s-t|_\T$.  By %
{the bound on $\eta_{r}''$}, the derivative
$\eta_r'$ is Lipschitz with constant at most $C_h/\Delta$.  In a local
chart, orient the shorter interval from $s$ to $t$ and %
let $\sigma=\sign(t-s)$.  Then
\begin{align}
  \sigma\bigl(\eta_r(t)-\eta_r(s)\bigr)\cdot\eta_r'(s)
  &=\sigma\int_s^t\eta_r'(q)\cdot\eta_r'(s)\,dq
  \notag\\
  &\ge\sigma\int_s^t
  \left(|\eta_r'(s)|^2
  -|\eta_r'(q)-\eta_r'(s)|\,|\eta_r'(s)|\right)dq
  \label{eq:local-dot-lower}\\
  &\ge|w|\bigg(\tfrac14-\tfrac32|w|\frac{C_{h}}\Delta\bigg).\notag
\end{align}
Choose $\alpha%
{{}=\min(\pi,1/(6C_{h}))>0}$.  If
$0<w<\alpha\Delta$, then the right-hand side of
\eqref{eq:local-dot-lower} is positive by \eqref{eq:eta-speed-curvature}.
Thus a local chord is nonzero and cannot be perpendicular to
the tangent at one of its endpoints.  In particular, no such pair is
doubly critical.

If $w\ge \alpha\Delta$, the chord--arc estimate
\eqref{eq:chord-arc-converse} for the original curve gives
\begin{equation}\label{eq:baseline-nonlocal-chord}
  |\gamma(t)-\gamma(s)|\ge2\Delta\sin(\alpha/2).
\end{equation}
{Additionally claiming that $a_{h}\le\frac{\sin(\alpha/2)}{\|h'\|_{L^{\infty}}+1}$}
we obtain
\begin{equation}\label{eq:perturbed-nonlocal-chord}
  |\eta_r(t)-\eta_r(s)|\ge {\Delta\sin(\alpha/2)%
  }%
  \qquad\text{whenever }w\ge \alpha\Delta.
\end{equation}

The local sign estimate and \eqref{eq:perturbed-nonlocal-chord} show first
that $\eta_r$ is injective.  They also show that every doubly critical
chord of $\eta_r$ has length at least $%
{\Delta\sin(\alpha/2)}$.  Combining this with
the curvature estimate in \eqref{eq:eta-speed-curvature} and the
thickness formula \eqref{eq:thickness-dcsd} proves the first estimate in
\eqref{eq:multiplicative-thickness}.  The length of $\eta_r$ is bounded
above by $3/2$, cf.~\eqref{eq:eta-speed-curvature}, %
and the
ropelength estimate in~\eqref{eq:multiplicative-thickness} follows.

All estimates are uniform in $r\in[-1,1]$, and
$r\mapsto\eta_r$ is continuous in $C^1$.  By the $C^1$-stability of
isotopy classes \cite[Lemma~3$\cdot$2]{DiaoErnstJanseVanRensburg1998}, the
ambient isotopy class of $\eta_r$ is locally constant in $r$.  Since $[-1,1]$ is
connected and $\eta_0=\gamma$, every $\eta_r$ is {ambient} isotopic to $\gamma$.
\end{proof}

\begin{remark}[Relation to earlier stability results]
Continuity of global curvature in the $C^{1,1}$-topology and isotopy
stability under a common global-curvature bound are proved in
\cite[Theorem~5 and Proposition~4]{SchurichtVonderMosel2003}; see also
\cite[Lemmas~2 {and 3}]{SchurichtVonderMosel2003Euler}.  The point of
\cref{lem:multiplicative-thickness} is the uniform, scale-invariant estimate:
its constants are independent of the reference curve and of its thickness.
\end{remark}

\begin{remark}\label{rem:fixed-h}
The constants in \cref{lem:multiplicative-thickness} depend on the fixed
$C^2$-norm of $h$.  No corresponding statement is claimed uniformly on
bounded sets of $H^2$-variations.  Such a claim would be false without
additional control, since high-frequency perturbations can create large
curvature.
\end{remark}

We now combine \cref{lem:multiplicative-thickness} with the two scales in
\cref{prop:scales}.

\begin{proposition}[Free stationarity of an elastic limit]
\label{prop:free-stationarity}
Under the assumptions of {\cref{prop:strong-convergence}}, let
$\gamma_{\vartheta_j}\to u$ strongly in $H^2$ be a subsequence as in
\cref{prop:strong-convergence}.  Then
\begin{equation}\label{eq:free-stationarity}
  D\Fbar(u)[h]=0
  \qquad\text{for every }h\in C^\infty(\T,\R^3).
\end{equation}
\end{proposition}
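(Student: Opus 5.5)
The plan is to compare $\gamma_j:=\gamma_{\vartheta_j}$ with the competitors $\gamma_j\pm\varepsilon_j h$, with $\varepsilon_j:=\vartheta_j^{1/2}$. These competitors are generally neither unit-speed nor based, so they are not literally in $\Cclass(\K)$. I therefore normalize them: dilate to length one, reparametrize by arclength, and translate so the base point is $0$. Call the result $\widehat\eta_j^\pm$. Since $\Fbar$ and $\Rop$ are invariant under these operations, minimality of $\gamma_j$ gives
\begin{equation*}
  \Fbar(\gamma_j)+\vartheta_j\Rop(\gamma_j)
  \le \Fbar(\gamma_j\pm\varepsilon_j h)+\vartheta_j\Rop(\gamma_j\pm\varepsilon_j h),
\end{equation*}
provided $\gamma_j\pm\varepsilon_jh$ is embedded and represents $\K$. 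Here I use $\Eb(\gamma_j)=\Fbar(\gamma_j)$ because $\gamma_j$ is a unit loop.

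\emph{Admissibility and the ropelength term.} By \cref{prop:scales}, $\Delta_j:=\Thi(\gamma_j)\ge c\vartheta_j^{1/3}$. Hence $\varepsilon_j=\vartheta_j^{1/2}\le a_h\Delta_j$ for $j$ large, because $\vartheta_j^{1/2}/\vartheta_j^{1/3}\to0$. \Cref{lem:multiplicative-thickness} with $r=\pm1$ then makes $\gamma_j\pm\varepsilon_jh$ embedded, ambient isotopic to $\gamma_j$, and gives $\Rop(\gamma_j\pm\varepsilon_jh)\le C_h'/\Delta_j\le C\vartheta_j^{-1/3}$. Since $\Rop(\gamma_j)\ge0$, dropping it on the left yields
\begin{equation*}
  \Fbar(\gamma_j)\le\Fbar(\gamma_j\pm\varepsilon_jh)+C\vartheta_j^{2/3}.
\end{equation*}
Dividing by $\varepsilon_j$ turns the error into $C\vartheta_j^{1/6}\to0$.

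\emph{Taylor expansion and passage to the limit.} The curves $\gamma_j$ converge in $C^1$ to the unit-speed curve $u$, so $\gamma_j+\tau h$ stays in a fixed bounded subset of $H^2_{\mathrm{reg}}$ for $|\tau|\le\varepsilon_j$. On such a set $D^2\Fbar$ is bounded by \cref{lem:Fbar-smooth}; I would check this from the explicit formula, which involves only $|\gamma'|^{-1}$ and $\gamma''\in L^2$, bounded in terms of a lower speed bound and an $H^2$-bound. Taylor's formula then gives
\begin{equation*}
  \Fbar(\gamma_j\pm\varepsilon_jh)=\Fbar(\gamma_j)\pm\varepsilon_jD\Fbar(\gamma_j)[h]+O(\varepsilon_j^2),
\end{equation*}
and therefore $\mp D\Fbar(\gamma_j)[h]\le C\vartheta_j^{1/6}+O(\varepsilon_j)$ for both signs. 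Strong $H^2$-convergence (\cref{prop:strong-convergence}) and continuity of $D\Fbar$ on $H^2_{\mathrm{reg}}$ give $D\Fbar(\gamma_j)[h]\to D\Fbar(u)[h]$. Letting $j\to\infty$ with both signs yields $D\Fbar(u)[h]=0$.

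\emph{Main obstacle.} The delicate step is the uniform second-order remainder: it requires a bound on $D^2\Fbar$ along the segments that is uniform in $j$, which rests on the uniform lower speed bound and the uniform $H^2$-bound. A further point to verify is that \cref{lem:multiplicative-thickness} applies. It requires $\gamma_j$ to be a unit-speed embedded $C^{1,1}$ curve of length one, which follows from positive thickness, and $h\in C^2$, which holds since $h$ is smooth. Everything else is bookkeeping of the exponents $1/2>1/3$, which make $\vartheta_j\Delta_j^{-1}/\varepsilon_j\to0$.
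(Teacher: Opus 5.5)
Your proposal is correct and follows the paper's proof essentially step by step. It uses the same competitors $\gamma_\vartheta\pm\vartheta^{1/2}h$, gets admissibility and the bound $\Rop\le C_h'/\Delta_\vartheta$ from \cref{lem:multiplicative-thickness} via $\Delta_\vartheta\ge c\vartheta^{1/3}$, uses the same normalization invariances, and takes the same two-sided limit. The only difference is that the paper writes the difference quotient as $\int_0^1 D\Fbar(\gamma_\vartheta\pm r\varepsilon_\vartheta h)[h]\,dr$ and needs only continuity of $D\Fbar$ at $u$. So the uniform second-order remainder you flag as the main obstacle is not needed; in any case it follows from continuity of $D^2\Fbar$ at $u$, since the segments $\gamma_j+\tau h$, $|\tau|\le\varepsilon_j$, converge to $u$ in $H^2$.
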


\begin{proof}
Put $\varepsilon_\vartheta:=\vartheta^{1/2}$.  By
\eqref{eq:scale-thickness},
\begin{align}
  \frac{\varepsilon_\vartheta}{\Delta_\vartheta}
  &\le C\vartheta^{1/6}\longrightarrow0,
  \label{eq:epsilon-over-thickness}\\
  \frac{\vartheta}
  {\varepsilon_\vartheta\Delta_\vartheta} {{}=\frac{\varepsilon_\vartheta}{\Delta_\vartheta}}
  &\le C\vartheta^{1/6}\longrightarrow0.
  \label{eq:penalty-over-epsilon}
\end{align}
Fix $h\in C^\infty(\T,\R^3)$.  For all sufficiently small
$\vartheta$, \cref{lem:multiplicative-thickness} applies to
\begin{equation}\label{eq:plus-minus-raw}
  \eta_\vartheta^\pm
  :=\gamma_\vartheta\pm\varepsilon_\vartheta h.
\end{equation}
Thus $\eta_\vartheta^\pm$ is ambient isotopic to $\gamma_\vartheta$ and
by virtue of~\eqref{eq:multiplicative-thickness}
\begin{equation}\label{eq:rope-new-competitor}
  \Rop(\eta_\vartheta^\pm)
  \le\frac{C_h}{\Delta_\vartheta}.
\end{equation}

The curves in \eqref{eq:plus-minus-raw} need not be based, unit-speed, or
of length one.  Translate each curve, dilate it by the reciprocal of its
length, and reparametrize it by arclength.  The resulting curve{s $\widehat\eta_\vartheta^\pm$} belong %
to $\Cclass(\K)$.  Reparametrization, dilation, and translation 
invariance give
\begin{equation}\label{eq:normalization-identities}
  \Eb(\widehat\eta_\vartheta^\pm)
  {{}=\Fbar(\widehat\eta_\vartheta^\pm)}
  =\Fbar(\eta_\vartheta^\pm),
  \qquad
  \Rop(\widehat\eta_\vartheta^\pm)
  =\Rop(\eta_\vartheta^\pm).
\end{equation}
Since $\Fbar(\gamma_\vartheta)=\Eb(\gamma_\vartheta)$, minimality of
$\gamma_\vartheta$ and \eqref{eq:rope-new-competitor} imply
\begin{equation}\label{eq:minimality-finite}
  \Fbar(\eta_\vartheta^\pm)-\Fbar(\gamma_\vartheta)
  \ge
  \vartheta\bigl(
  \Rop(\gamma_\vartheta)-\Rop(\eta_\vartheta^\pm)\bigr)
  \ge-\frac{C_h\vartheta}{\Delta_\vartheta}.
\end{equation}
After division by the positive number $\varepsilon_\vartheta$,
\eqref{eq:penalty-over-epsilon} gives
\begin{equation}\label{eq:finite-quotient-lower}
  \frac{\Fbar(\gamma_\vartheta
  \pm\varepsilon_\vartheta h)-\Fbar(\gamma_\vartheta)}
  {\varepsilon_\vartheta}
  \ge-o(1)\qquad\text{as}\quad\vartheta\downarrow0\iff\varepsilon_\vartheta\downarrow0.
\end{equation}

It remains to pass only the smooth bending part to the limit.  By the
fundamental theorem of calculus on the open set
$H^2_{\mathrm{reg}}$,
\begin{align}
  \frac{\Fbar(\gamma_\vartheta+\varepsilon_\vartheta h)
  -\Fbar(\gamma_\vartheta)}{\varepsilon_\vartheta}
  &=\int_0^1D\Fbar(\gamma_\vartheta
    +r\varepsilon_\vartheta h)[h]\,dr,
  \label{eq:FTC-plus}\\
  \frac{\Fbar(\gamma_\vartheta-\varepsilon_\vartheta h)
  -\Fbar(\gamma_\vartheta)}{\varepsilon_\vartheta}
  &=-\int_0^1D\Fbar(\gamma_\vartheta
    -r\varepsilon_\vartheta h)[h]\,dr.
  \label{eq:FTC-minus}
\end{align}
Strong $H^2$-convergence, $\varepsilon_\vartheta\to0$, and continuity
of $D\Fbar$ from \cref{lem:Fbar-smooth} show that {the right-hand sides of}
\eqref{eq:FTC-plus} and \eqref{eq:FTC-minus} converge respectively to
$D\Fbar(u)[h]$ and $-D\Fbar(u)[h]$ as $\vartheta\downarrow0$.  Passing to the limit $\vartheta\downarrow0$ in
\eqref{eq:finite-quotient-lower} yields both
$D\Fbar(u)[h]\ge0$ and $-D\Fbar(u)[h]\ge0$, proving
\eqref{eq:free-stationarity}.
\end{proof}

\begin{remark}[No ropelength derivative]\label{rem:no-rope-derivative}
The proof uses only the values
$\Rop(\eta_\vartheta^\pm)$ and the finite upper bound
\eqref{eq:rope-new-competitor}.  No directional derivative, Clarke
subgradient, balance criterion, or contact measure for ropelength is
introduced.  The only differentiated functional is the smooth local
functional $\Fbar$.
\end{remark}

\section{Classification and proof of the main theorem}
\label{sec:classification}

\begin{lemma}[Constant-curvature free-stationary curves]
\label{lem:classification}
Let $\beta\in\mathbb N$, set $\lambda:=2\pi\beta$, and let
$u\in H^2(\T,\R^3)$ be a unit-speed closed curve satisfying
\begin{equation}\label{eq:classification-assumptions}
  |u''|=\lambda\quad\text{a.e.},
  \qquad
  D\Fbar(u)[h]=0
  \quad\text{for every }h\in C^\infty(\T,\R^3).
\end{equation}
Then $u$ is, up to a Euclidean isometry and a shift or reversal of the
parameter, the $\beta$-covered circle $u_\beta$.
\end{lemma}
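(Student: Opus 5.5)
Since $|u''|=\lambda$ a.e. and $u$ is a unit loop, we have $\Eb(u)=\lambda^2$. Hence \eqref{eq:Fbar-first-constant} of \cref{lem:Fbar-smooth} applies with $T:=u'\in H^1$ and $K:=u''\in L^2$. The stationarity hypothesis then reads
\[
  \int_{\T}\bigl(K\cdot h''-\lambda^2T\cdot h'\bigr)\,ds=0
  \qquad\text{for all }h\in C^\infty(\T,\R^3).
\]
Since $T'=K$ in the weak sense, $\int T\cdot h'=-\int K\cdot h$. The identity therefore says that $K''+\lambda^2K=0$ in $\mathcal D'(\T,\R^3)$. The key point is that this equation is linear, with constant coefficients and no dependence on the unknown direction of $K$. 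I would first upgrade the regularity: a distributional solution of $K''=-\lambda^2K$ with $K\in L^2$ has $K''\in L^2$, hence $K\in H^2$, and by bootstrapping $K\in C^\infty$. Equivalently, expanding $K$ in Fourier series, the coefficients $\hat K(m)$ satisfy $(\lambda^2-4\pi^2m^2)\hat K(m)=0$. So only the modes $m=\pm\beta$ survive, which is exactly where $\lambda=2\pi\beta$ is used. Consequently
\[
  K(s)=A\cos(2\pi\beta s)+B\sin(2\pi\beta s),\qquad A,B\in\R^3.
\]

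Next I integrate. Since $T'=K$ and $T$ is periodic, the mean of $K$ vanishes automatically for $\beta\ge1$. This gives $T(s)=T_0+\lambda^{-1}\bigl(A\sin(2\pi\beta s)-B\cos(2\pi\beta s)\bigr)$ for a constant $T_0$. Periodicity of $u$ forces $\int_\T T=0$, so $T_0=0$. It then remains to impose the pointwise constraints $|T|\equiv1$ and $|K|\equiv\lambda$, which now hold everywhere by continuity. Write $\phi=2\pi\beta s$. The constraint $|K|^2=\lambda^2$ for all $\phi$ gives
\[
  |A|^2\cos^2\phi+|B|^2\sin^2\phi+2A\cdot B\sin\phi\cos\phi=\lambda^2 .
\]
Comparing coefficients yields $|A|=|B|=\lambda$ and $A\cdot B=0$; the constraint $|T|\equiv1$ gives the same conditions. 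Set $e_1:=-B/\lambda$ and $e_2:=A/\lambda$, which are orthonormal. Then $T=\cos\phi\,e_2-\sin\phi\,e_1$, up to the sign and labelling convention. Integrating once more gives $u(s)=c+\frac1\lambda\bigl(e_1\cos\phi+e_2\sin\phi\bigr)$, which is $u_\beta$ from \eqref{eq:covered-circle-intro}. A rotation maps $(e_1,e_2)$ to the standard frame, and a translation together with a parameter shift matches the normalization in \eqref{eq:un-covered-circle}. A reversal is needed if the chosen orientation of $(e_1,e_2)$ is opposite, or one can absorb it into a reflection.

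The only delicate step is the first one: justifying the passage from the weak formulation to the distributional ODE for $K$ when $K$ is merely $L^2$, since $T\cdot h'$ has to be rewritten via integration by parts with $T\in H^1$. This is routine, and the Fourier-coefficient formulation avoids any regularity bootstrap. I would test with $h=e_i e^{-2\pi i m s}$, taking real and imaginary parts. In the resulting identity $4\pi^2m^2\hat K(m)-\lambda^2\hat K(m)=0$, the second term comes from $\int T\cdot h'=2\pi i m\,\hat T(m)$ combined with $\hat K(m)=2\pi i m\,\hat T(m)$. I expect no real obstacle beyond keeping signs consistent. Unlike the situation described in the introduction, no contact-point analysis is needed, because the linear ODE holds globally on $\T$.
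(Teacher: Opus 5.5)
Your proof is correct and essentially the same as the paper's. You derive $K''+\lambda^2K=0$ in $\mathcal D'$ from \eqref{eq:Fbar-first-constant}, solve it as $A\cos(\lambda s)+B\sin(\lambda s)$, and obtain $|A|=|B|=\lambda$ and $A\cdot B=0$ from $|K|\equiv\lambda$. Closedness then removes the secular term: you use $\int_{\T}T\,ds=0$ where the paper uses the drift $vs$ in $u$.

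There are two cosmetic slips. First, the labelling consistent with your displayed $T$ and $u$ is $e_1=-A/\lambda$, $e_2=-B/\lambda$, as in the paper. Second, for $h=e_ie^{-2\pi i m s}$ one has $\int_{\T}T\cdot h'\,ds=-2\pi i m\,\hat T_i(m)$, not $+2\pi i m\,\hat T_i(m)$; this corrected sign is the one that gives your (correct) identity $(4\pi^2m^2-\lambda^2)\hat K(m)=0$.
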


\begin{proof}
Put $T:=u'$ and $K:=u''$.  Formula
\eqref{eq:Fbar-first-constant} and %
\begin{equation}\label{eq:weak-K-ODE}
  0=\frac12D\Fbar(u)[h].
\end{equation}
leads to
\begin{equation}\label{eq:K-ODE}
  K''+\lambda^2K=0\qquad\text{on }\T
\end{equation}
in the sense of distributions.  Standard regularity for this linear
constant-coefficient equation (see, e.g, \cite[Thm.~6.33]{zbMATH00826154}) shows that $K$ and $u$ are smooth, and
\begin{equation}\label{eq:K-solution}
  K(s)=A\cos(\lambda s)+B\sin(\lambda s), \qquad s\in\T,
\end{equation}
for some $A,B\in\R^3$.

Since $|K|\equiv\lambda$ and the phase $\lambda s$ ranges over the full
circle, \eqref{eq:K-solution} gives
\begin{equation}\label{eq:AB-orthogonal}
  |A|=|B|=\lambda,
  \qquad A\cdot B=0.
\end{equation}
Integrating \eqref{eq:K-solution} twice yields
\begin{equation}\label{eq:u-with-drift}
  u(s)=c+vs-\frac{A}{\lambda^2}\cos(\lambda s)
  -\frac{B}{\lambda^2}\sin(\lambda s).
\end{equation}
Because $\lambda=2\pi\beta$, both trigonometric terms are one-periodic.
Closedness of $u$ therefore forces $v=0$.  Setting
$e_1:=-A/\lambda$ and $e_2:=-B/\lambda$, relation
\eqref{eq:AB-orthogonal} shows that $e_1,e_2$ are orthonormal, and
\eqref{eq:u-with-drift} becomes
\[
  u(s)=c+\frac1\lambda
  \bigl(e_1\cos(\lambda s)+e_2\sin(\lambda s)\bigr).
\]
This is the asserted $\beta$-covered circle.
\end{proof}

\begin{proof}[Proof of \cref{thm:main-intro}]
Let $\vartheta_j\downarrow0$.  By
\cref{prop:strong-convergence}, after taking a subsequence the minimizers
converge strongly in $H^2$ to a unit-speed curve $u$ with
$|u''|=2\pi\beta$ almost everywhere.  By
\cref{prop:free-stationarity}, $D\Fbar(u)[h]=0$ for every smooth variation $h$.
The conclusion follows from
\cref{lem:classification}.
\end{proof}

\begin{corollary}[Torus knots]\label{cor:torus-knots}
Let $a>b>1$ be coprime.  Under \cref{ass:GFM}, every elastic knot for
the torus knot class $T(a,b)$ is the round circle traversed $b$ times.
\end{corollary}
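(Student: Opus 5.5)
The corollary follows from \cref{thm:main-intro} once we show that $T(a,b)$ is a one-component BB knot class with $\beta(T(a,b))=\operatorname{braid}(T(a,b))=b$. So the plan is to verify the hypotheses of the main theorem and read off the conclusion with $\beta=b$.

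\emph{Step 1: one component.} Since $\gcd(a,b)=1$, the torus knot $T(a,b)$ is a knot rather than a link. Moreover, $b>1$ and $a>1$ make it nontrivial.

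\emph{Step 2: the two indices.} We invoke Schubert's classical theorem: the bridge index of a torus knot is $\min(a,b)$, hence $\beta(T(a,b))=b$ because $a>b$. For the braid index we use the standard fact $\operatorname{braid}(T(a,b))=\min(a,b)=b$. The upper bound comes from the explicit closed $b$-braid $(\sigma_1\cdots\sigma_{b-1})^a$, which winds $b$ times around the core of a standard solid torus; this is exactly the torus-knot family of \cite[Lemma~5.1]{GerlachReiterVonderMosel2017}. The lower bound follows from Schubert, since the bridge index never exceeds the braid index: a closed $n$-braid has bridge presentation with $n$ maxima of the height in a suitable direction. Alternatively, one may cite the MFW inequality (Franks--Williams), or the list of BB knots in \cite{DiaoErnstReiter2021}, where torus knots are noted as examples. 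Together these give $\beta=\operatorname{braid}=b$, so $T(a,b)$ is BB.

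\emph{Step 3: apply the theorem.} Under \cref{ass:GFM}, \cref{thm:main-intro} yields that every elastic limit equals $u_b$ up to isometry and reparametrization. This is the round circle of radius $1/(2\pi b)$ traversed exactly $b$ times. Elastic knots were defined as $C^1$-limits of subsequences, and the theorem covers every subsequential limit.

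The only substantive point is the identification of both indices with $b$. Since these are classical results (Schubert; Murasugi/Franks--Williams), the main obstacle is just citing them correctly, together with the convention that $T(a,b)$ with $a>b$ has minimal braid presentation on $b$ strands.
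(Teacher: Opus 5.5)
Your proposal is correct and follows the paper's proof: the paper cites \cite[Section~4.1]{DiaoErnstReiter2021} for $\beta(T(a,b))=\operatorname{braid}(T(a,b))=b$, concludes that $T(a,b)$ is BB, and applies \cref{thm:main-intro}. Your extra justification is a valid unpacking of that citation: Schubert gives the bridge index, the explicit $b$-strand braid bounds the braid index from above, and $\beta\le\operatorname{braid}$ bounds it from below.
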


\begin{proof}
The bridge and braid indices of $T(a,b)$ both equal $b$; see, for
example, \cite[Section~4.1]{DiaoErnstReiter2021}.  Thus $T(a,b)$ is BB,
and \cref{thm:main-intro} applies.
\end{proof}

\bibliographystyle{amsplain}
\bibliography{elastic_bb_knots}

\end{document}